\newtheorem{theorem}{Theorem}[section]
\newtheorem{corollary}{Corollary}[section]
\newtheorem{lemma}{Lemma}[section]
\theoremstyle{definition}
\newtheorem{assumption}{Assumption}[section]
\numberwithin{equation}{section}
\newcommand{\cmark}{\textbf{\checkmark}}
\newcommand{\xmark}{\textbf{\texttimes}}
\title{Oblivious Stochastic Composite Optimization}
\author{Cl\'ement Lezane, Alexandre d'Aspremont} 
\begin{document}
\maketitle

\begin{abstract}
In stochastic convex optimization problems, most existing adaptive methods rely on prior knowledge about the diameter bound $D$ when the smoothness or the Lipschitz constant is unknown. This often significantly affects performance as only a rough approximation of $D$ is usually known in practice. Here, we bypass this limitation by combining mirror descent with dual averaging techniques and we show that, under oblivious step-sizes regime, our algorithms converge without any prior knowledge on the parameters of the problem. We introduce three oblivious stochastic algorithms to address different settings. The first algorithm is designed for objectives in relative scale, the second one is an accelerated version tailored for smooth objectives, whereas the last one is for relatively-smooth objectives. All three algorithms work without prior knowledge of the diameter of the feasible set, the Lipschitz constant or smoothness of the objective function. We use these results to revisit the problem of solving large-scale semidefinite programs using randomized first-order methods and stochastic smoothing. We extend our framework to relative scale and demonstrate the efficiency and robustness of our methods on large-scale semidefinite programs.
\end{abstract}

\section{Introduction}
We address stochastic convex optimization problems where regularity parameters are unknown a priori. Our goal is to solve the population loss minimization problem
\begin{equation} 
\label{eq-population-loss}
\min_{x \in \mathcal{X}} \quad  F(x) : =\mathbb{E}_{\xi} [ f_{\xi} ( x )]
\end{equation}
with $\xi$ being a random variable, $f_\xi$ being convex functions and $\mathcal{X}$ being a convex set. In the adaptive setting (cf. \cite{levy2018online,cutkosky2019anytime}) three key assumptions are fundamental in all our problem settings. We first assume access to an unbiased stochastic oracle $\nabla f_{\xi}(x) $ estimating $\nabla F(x)$, i.e. 
\[
\forall x \in \mathcal{X}, \quad \mathbb{E}_{\xi} [\nabla f_\xi(x)] = \nabla F(x)
\] 
which provides noisy gradient information for the population loss. The second assumption is the availability of a strictly convex function $H$ that serves as a regularizer. For example, we could define $H$ as the squared norm $H = \|\cdot \|_p^2$ for $\ell_p$ spaces when $1<p \leq 2$. The last assumption concerns various bounds on the problem depending on the setting, that will be detailed in the next subsection.

\subsection{Problem settings}
We will focus on oblivious algorithms in multiple problem settings, namely  optimization in relative scale, standard smoothness and relative smoothness assumptions. In what follows, we will write the Bregman divergence of a continuously derivable function $f$ by $D^f$, 
\[ \forall x,y \in \mathcal{X}, \quad D^{f}(x,y) := f(x) - f(y) - \langle \nabla f(y), x-y  \rangle\]

\textbf{Optimization in relative scale.} In the relative scale setting, we extend results from \cite{nesterov2023randomized} which impose a relative constraint on the stochastic oracle and a lower bound on $F$, written
\begin{equation}
\label{def-relative-scale}
\begin{aligned}
\forall x \in \mathcal{X}, \quad & \mathbb{E}_{\xi}  \Big[ \| \nabla f_{\xi}(x)\|^2 \Big] \leq  \mathcal{M} F(x) \\
\exists H, \exists \hat{x}_0 \in \mathcal{X}, \forall x \in \mathcal{X}, \quad & F(x) \geq \Gamma  \Big( H(x) -  H(\hat{x}_0)  \Big)
\end{aligned}
\end{equation}
where $\mathcal{M}, \Gamma >0$ are positive constants and $\hat{x}_0 \in \mathcal{X}$ a fixed point in the space. In the original setting \cite{nesterov2023randomized}, the condition is satisfied with $H(x) := \| x - \hat{x}_0 \|_2^2$ being the Euclidean norm. As a consequence, the objective function $F$ must be positive. The authors used prior knowledge on $(\mathcal{M}, \Gamma,\hat{x}_0)$ and designed an algorithm that guarantees the following convergence rate,
\begin{equation}
    \mathbb{E}[F(x_{T+1})] - F_\star \leq  F_\star \sqrt{\frac{2 \mathcal{M}}{\Gamma T}}
\end{equation}
where $x_{T+1}$ is the final output of their algorithm.

\textbf{Minimizing smooth convex functions.} For the standard smooth case, we require the objective function $F$ to be $L$-smooth with respect to an arbitrary norm $\| \cdot\|$ and the stochastic noise to have a finite variance $\sigma^2$, i.e.
\begin{equation}
\label{def-smooth}
\begin{aligned}
\forall x, y \in \mathcal{X}, \quad & D^{F}(x,y) \leq \frac{L}{2} \|x-y\|^2 \\
\forall x \in \mathcal{X}, \quad & \mathbb{E}_{\xi} \Big[ \| \nabla f_{\xi}(x) - \nabla F(x) \|_{\ast}^2 \Big] \leq \sigma^2 
\end{aligned}
\end{equation}
where $\| \cdot \|_\ast$ is the dual norm of $\| \cdot \|$. In that case, we can use the accelerated framework in \cite{Lan11} to get
\begin{equation}
 \mathbb{E} \Big[ F(x_{T+1})- F_\star \Big] \lesssim \frac{L D^2}{T^2 } + \frac{\sigma D}{\sqrt{T}},  
\end{equation}
where $x_{T+1}$ is the output of the accelerated algorithm. The convergence rate is optimal for the smooth case \cite{NY83,Lan11}, but the algorithm requires prior knowledge on $L$ and $D$.

\textbf{Optimization for relatively-smooth functions.} In the relatively-smooth setting \cite{Lu2016RelativelySC} \cite{Dragomir2021FastSB}, the smoothness is related to a convex function $H$ instead of a norm. We note that, in the relatively smooth case, $H$ is only required to be strictly convex and twice continuously differentiable. We assume that the stochastic oracle satisfies
\begin{equation}
\begin{aligned}
\forall x,y \in \mathcal{X}, \quad D^{F}(x,y) \leq L_\star D^{H}(x,y)  \\
\forall \eta < \eta_0, \quad \mathbb{E}_\xi \left[ D^{H_\star}\Big( \nabla H(x) - 2\eta \nabla f_{\xi}(x_F), \nabla H(x)\Big) \right] \leq 2\eta^2 \sigma^2 
\end{aligned}
\end{equation}

For that relative smooth case, we can use the mirror descent algorithm in \cite{Dragomir2021FastSB} to get
\begin{equation}
\mathbb{E} \Big[ F(x_{T+1})- F_\star \Big] \lesssim \frac{\sigma D_0}{\sqrt{T}} 
\end{equation}
where $x_{T+1}$ is the final output of the algorithm. Note that we need an extra assumption in relative smoothness to show convergence.
\begin{assumption}
For all $y \in \mathbb{R}^d$, the following problem
\[ \min_{x \in \mathcal{X}} H(x) - \langle y, x \rangle \]
has an unique solution in the interior of $\mathcal{X}$.
\end{assumption}

\subsection{Related work}

Previous methods required prior knowledge of different parameters. Having no or partial information on the problem's parameters has a direct impact on the performance of most first order algorithms, as they are often used to select step-sizes for example. Even if one does have a rough estimate of some parameters (such as the Lipschitz constant), over or under-estimation makes these algorithms slow and inefficient. The key challenge is developing algorithms that can either handle parameter uncertainty or learn these parameters from scratch. 

\subsubsection{Polyak step-size and Line search.}
For the non-smooth case, the Polyak step-size offers a viable alternative \cite{polyak}, but this method requires knowledge of $F_\star := \min F$. For the smooth case, line search methods help if the gradient is known exactly \cite{linesearchNesterov,linesearch14}, but designing a stochastic counterpart of line search is still an open question.

\subsubsection{Grid search.} 
When direct evaluation of the objective function $F$ is feasible, an alternative strategy involves running multiple parallel algorithm instances, each configured with different parameter settings. The final solution is selected as the output that achieves the minimum value of $F$. This Grid seach approach was originally proposed by Nemirovski for smooth problems where the gradient is deterministic and known, recommending $\log(T)$ parallel sessions to achieve optimal performance guarantees. We have extended this idea for the stochastic case when evaluating the objective is still possible but might be costly. More details can be found in Section \ref{section-parallelization}.

\subsubsection{Adagrad and online algorithms.}
For the smooth case, different variations on the Adagrad step-size \cite{adagrad} prove useful if we have prior knowledge on the diameter $D$ of the feasible set $\mathcal{X}$, but $L$ remains unknown. Writing $g_t = \nabla f_{\xi_t}(x_t)$ the stochastic oracle at $x_t$, the Adagrad step-size becomes
\[ 
\eta_t = \frac{2D}{\sqrt{D^2 + \sum_{\tau=1}^{t-1} \alpha_\tau \| g_\tau \|^2}}
\]
We mention here a general adaptive method \cite{levy2018online} that works for both non-smooth and smooth cases, and has inspired our proof technique. In the smooth case, the convergence rate becomes
\begin{equation} 
 \mathbb{E} \Big[ F(x_{T+1})- F_\star \Big] \lesssim \frac{LD^2}{T} + \frac{\sigma D}{\sqrt{T}},  
\end{equation}
which is the standard non-accelerated rate. In comparison, the optimism in online learning scheme \cite{cutkosky2019anytime} provides a sharper accelerated rate written
\begin{equation} 
 \mathbb{E} \Big[ F(x_{T+1})- F_\star \Big] \lesssim \frac{LD^2}{T\sqrt{T}} + \frac{\sigma D}{\sqrt{T}},   
\end{equation}

but the optimism algorithm requires implicitly prior knowledge on $D$. In the same way, another rate $O \Big( \frac{L D^2\log( \mathcal{M} T)}{T^2} + \frac{\sigma D\log( \mathcal{M} T) }{\sqrt{T}} \Big) $ is derived in \cite{cutkosky2019anytime}; however, this method requires an almost sure bound on the stochastic gradients, aside from knowledge of $D_0$.

More bounds have been provided in later works \cite{pmlr-v119-joulani20a}, but they require additional conditions or have a convergence rate explicitly depending on the dimension $d$, so we omit them here. On the other hand, a parameter free online mirror descent algorithm has been proposed in \cite{jacobsen2022parameterfree}, assuming extra conditions on the second derivative of the proximal function, which do not cover the smooth case described here.

\subsection{Our contributions}
Our objective here is to develop algorithms capable of handling diverse stochastic convex optimization scenarios where key problem parameters remain unknown.

\subsubsection{Composite Mirror Descent with Oblivious Step-sizes.} 
When confronting optimization problems with limited knowledge of regularity parameters, we introduce a regularization framework $H$ based on complementary composite stochastic optimization techniques \cite{Nesterov12,diakonikolas2023complementary,daspremont2022optimal}. We then construct a family of oblivious step-sizes $\{\alpha_t, \beta_t, \gamma_t\}$ that enable convergence without requiring prior knowledge of Lipschitz or smoothness constants.

\subsubsection{Applications Across Different Problem Classes.} 
Our algorithmic framework applies to three important non-smooth optimization settings: relative-scale precision targets \cite{nesterov2023randomized}, smooth objectives \cite{GL12}, and relatively-smooth functions \cite{Dragomir2021FastSB}. Relative-scale optimization measures complexity in terms of digits of precision gained rather than absolute precision targets, often yielding substantial computational savings. Relative smoothness generalizes the objective function's geometric properties by relating them to arbitrary regularization functions instead of standard norms.

\subsubsection{Practical Impact and Theoretical Foundations.} 
Our oblivious algorithms combine ease of implementation with strong empirical performance while providing interesting theoretical convergence guarantees. We summarize our contribution with respect to existing methods in Table~\ref{table comparison}.

\begin{table}[ht]
\centering
\begin{tabular}{|c|c|c|c|c|} 
\hline
  & \multicolumn{2}{|c|}{Oblivious} & &  \\
 \hline
 Class & $D$   & $\mathcal{M}/L/L_\star$ & Unbounded noise & Convergence rate \\ [0.5ex] 
 \hline \hline
 \multirow{2}{*}{Relative scale}
   & \xmark  & \xmark  &   \cmark & $  O \Big( \sqrt{\frac{\mathcal{M}  }{\Gamma  }} \frac{F_\star}{\sqrt{T}} \Big) $ 
   \cite{levy2018online}  \\
& \cmark &  \cmark  & \cmark  & $ \Tilde{O} \left( \sqrt{\frac{\mathcal{M}  }{\Gamma  }} \frac{F_\star}{\sqrt{T}} + \frac{1} {T^{n/2}} \right) $     \\ 
\hline
  \multirow{4}{*}{$L$-smooth}
  &  \xmark  & \cmark  & \cmark  & $ O \Big( \frac{L D^2}{T  } + \frac{\sigma D}{\sqrt{T}} \Big) $ \cite{levy2018online} \\
  &  \xmark  & \cmark  & \cmark  & $ O \Big( \frac{L D^2\log(T)^2}{T  } + \frac{\sigma \log(T) }{\sqrt{T}} \Big) $  \cite{cutkosky2019anytime}\\
    &  \xmark  & \cmark  & \xmark  & $ \Tilde{O} \Big( \big(\frac{L D^2}{T^2 } + \frac{\sigma D}{\sqrt{T}}\big) \Big) $  \cite{cutkosky2019anytime} \\
      &  \cmark  & \cmark & \cmark    &$ \Tilde{O} \Big( \frac{\sigma  D }{\sqrt{T}} + \frac{1}{T^n} \Big)^{(\star)}  $  \\
 \hline
$L_{\ast}$-relatively-smooth  & \xmark  & \xmark &  \cmark   & $ O \Big( \frac{\sigma D_0}{\sqrt{T}} \Big) $  \cite{Dragomir2021FastSB}  \\ 
$L_{\ast}$-relatively-smooth  & \cmark  & \cmark &  \xmark   & $ \Tilde{O} \Big(  \frac{\sigma  D_0 }{\sqrt{T}} + \frac{1}{T^{n}} \Big)^{(\star)}$    \\ 
\hline
\end{tabular}\vskip 1ex
\caption{Summary of convergence rates with limited prior knowledge on parameters (absolute constants omitted).  The exponent $n$ is the degree of the oblivious step size which can be chosen in an arbitrary way. $(\star)$ are the contributions of this paper. Results given with $\Tilde{O}$ have an additional polylog factor. \label{table comparison}}
\end{table}

\subsection{Applications to semidefinite programming} 
Consider the problem of minimizing the maximum eigenvalue of a symmetric matrix $x$, over a convex set $\mathcal{X}\subseteq S^d$, written
\begin{equation} \label{lambda max}
\min_{x \in \mathcal{X}} \quad   \lambda_{\max}( x ), 
\end{equation}
in the variable $x \in S^d$ where $S^d$ is the set of symmetric real $d\times d$ matrices.

While semidefinite programs (SDP) can be efficiently solved by interior point methods, their algorithmic complexity becomes prohibitive for large-scale problems. First-order methods offer a viable alternative, albeit with a potential sacrifice in accuracy. For many large-scale applications however, achieving an approximate solution within an acceptable range of the optimum is often sufficient. This has fueled a growing interest in the development of efficient first-order algorithms to handle large-scale semidefinite programs \cite{daspremont2014stochastic,Yurtsever_2021}.

Three common approaches to solve problem \eqref{lambda max} will serve as a starting point for our work. The first approach is the celebrated mirror prox algorithm \cite{Nemirovski04}, whose main limitation is the requirement of a full spectral decomposition of a symmetric matrix at each iteration.

A second approach is based on a technique known as {\em stochastic smoothing}. 
In \cite{daspremont2014stochastic}, for instance, the authors use stochastic smoothing with $k$ random rank one perturbations and target precision $\epsilon >0$ to produce a smooth approximation $F_{k,\epsilon}$, written as follows
\begin{equation}
  F_{k,\epsilon}(x) := \mathbb{E}_{z_i \sim \mathcal{N}(0,I_d)} \left[\max _{i=1, \ldots, k} \lambda_{\max }\left(x+(\epsilon / d) z_i z_i^T\right)\right]. 
\end{equation}
More recently, a different type of stochastic smoothing based on matrix powers with exponent $p\geq 1$ was formulated in \cite{nesterov2023randomized} as
\begin{equation}
  F_p(x) := \mathbb{E}_{u \in \mathcal{U}([0,1]^d)} \Big[ \left\langle x^p u, u\right\rangle^{1 / p} \Big]. 
\end{equation}
Computing the minimum of an approximate objective function, as above, can be more effective depending on  problem structure. A more detailed comparison is given in the introduction of \cite{daspremont2014stochastic} and in \cite{nesterov2023randomized}. For solving large scale SDPs, we will call $F$ the approximate objective given by a form of stochastic smoothing (it could be $F_{k,\epsilon}$ or $F_p$) and we will suppose $F$ convex (which is the case above).

\section{Preliminaries}
We recall the framework of the Stochastic Convex Optimization problem
\begin{align*} 
\min_{x \in \mathcal{X}} \quad F(x) =   \mathbb{E}_{\xi} [ f_{\xi} ( x )].
\end{align*}
We suppose that we have a stochastic gradient oracle $\nabla f_\xi(x)$, and define the composite mirror descent step as
\begin{equation} 
x_{t+1} = \arg \min_x \{ \alpha_t \langle \nabla f_{\xi_t}(x_t),x \rangle + \beta_t H(x) + \gamma_t D^{H}(x,x_t) \} 
\end{equation}
where $\{ \alpha_t, \beta_t, \gamma_t\}_{t \geq 1}$ is our sequence of step-sizes. Inspired by previous works on the composite setting \cite{Beck:2009,Nesterov:2013,GL12,daspremont2022optimal}, we focus on the following regime
\begin{equation}
\label{oblivious-stepsize}
\begin{aligned}
\begin{cases}
\gamma_{t+1} -\gamma_t & \leq \beta_t \\
\gamma_{t}/\alpha_t & \leq \gamma_{t+1}/\alpha_{t+1}. \\
\end{cases}
\end{aligned}
\end{equation}

Particularly, when the information on different parameters of the problem is not available, we will consider general oblivious step-sizes that do not take any parameters into account. More precisely, we use a family of polynomial step-sizes $\alpha_t \propto t^{n}, \beta_t \propto \mu t^{n}, \gamma_t \propto \mu t^{n+1}/(n+1)$ with a constant $\mu >0$ and a degree $n \geq 0$. As we will see later in Section~\ref{section-composite-mirror descent}, the parameters $(\mu,n)$ are oblivious to the instance, as they are selected a priori, in a problem-independent way. 

\subsection{Useful Lemmas}
We introduce important lemmas in this subsection. First, we present an extension of the three-points identity.
\begin{lemma}  \label{lem:prox}
Let $f$ be a convex function and $\nu$ continuously differentiable. If we consider 
\[ 
u^{\star} = \arg\min_{u\in {\mathcal{X}}} \{f(u) + D^{\nu}(u,y)\}, 
\]
with the Bregman divergence $D^{\nu}(a,b) := \nu(a) - \nu(b) -\langle \nabla \nu (b),a-b\rangle$ then for all $u$,
\[
f(u^{\star}) + D^{\nu}(u^\star,y) + D^{f}(u,u^\star) \leq f(u) +  D^{\nu}(u,y) - D^{\nu}(u,u^\star). 
\]\end{lemma}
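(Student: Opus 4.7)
The plan is to combine the first-order optimality condition at $u^{\star}$ with the classical three-points identity for Bregman divergences, and then rewrite the resulting gradient inner product in terms of $f$-values and the Bregman divergence $D^{f}$.

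First I would write down the optimality condition for the proximal subproblem. Since $u^{\star}$ minimizes $f(u)+D^{\nu}(u,y)$ over $\mathcal{X}$, for every $u\in\mathcal{X}$,
\[
\langle \nabla f(u^{\star}) + \nabla \nu(u^{\star}) - \nabla \nu(y),\, u-u^{\star}\rangle \geq 0,
\]
(interpreting $\nabla f(u^{\star})$ as any subgradient if $f$ is not differentiable; in our applications $f$ is linear, so no subtlety arises). Rearranging gives
\[
\langle \nabla \nu(u^{\star}) - \nabla \nu(y),\, u-u^{\star}\rangle \geq -\langle \nabla f(u^{\star}),\, u-u^{\star}\rangle.
\]

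Second, I would invoke the standard three-points identity for the Bregman divergence of $\nu$: for any $u,u^{\star},y$,
\[
D^{\nu}(u,y) - D^{\nu}(u,u^{\star}) - D^{\nu}(u^{\star},y) = \langle \nabla \nu(u^{\star})-\nabla \nu(y),\, u-u^{\star}\rangle.
\]
This is a direct consequence of expanding the three Bregman divergences from the definition $D^{\nu}(a,b)=\nu(a)-\nu(b)-\langle \nabla\nu(b),a-b\rangle$ and canceling terms, so I would just cite it rather than belaboring the algebra.

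Third, I would translate the right-hand side using $D^{f}$. By definition, $D^{f}(u,u^{\star}) = f(u)-f(u^{\star})-\langle \nabla f(u^{\star}),u-u^{\star}\rangle$, so
\[
-\langle \nabla f(u^{\star}),\, u-u^{\star}\rangle = f(u^{\star}) - f(u) + D^{f}(u,u^{\star}).
\]
Chaining the three facts gives
\[
D^{\nu}(u,y) - D^{\nu}(u,u^{\star}) - D^{\nu}(u^{\star},y) \geq f(u^{\star}) - f(u) + D^{f}(u,u^{\star}),
\]
which is exactly the claimed inequality after rearrangement.

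The only genuine subtlety (the "hard part", though it is minor) is justifying the first-order optimality condition when $f$ is merely convex and not smooth, because then $D^{f}(u,u^{\star})$ must be read with respect to the same subgradient that appears in the optimality condition. In all uses of this lemma in the paper, $f$ will be an affine/linear model of the objective, so $\nabla f$ is a constant vector and the ambiguity disappears; I would simply mention this and proceed.
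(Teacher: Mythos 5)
Your proof is correct and follows essentially the same route as the paper: first-order optimality at $u^\star$, the three-points identity for $D^{\nu}$, and then rewriting $\langle\nabla f(u^\star),u-u^\star\rangle$ via the definition of $D^{f}$. The only cosmetic difference is that the paper writes the optimality condition with $\nabla D^{\nu}(u^\star,y)$ in place of $\nabla\nu(u^\star)-\nabla\nu(y)$ (these are the same), and your closing remark about the subgradient ambiguity when $f$ is nonsmooth is a fair caveat, though the paper does not bother to address it.
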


\begin{proof}
From the first order optimality conditions, for all $u\in{\mathcal{X}}$: 
\[ \langle \nabla f(u^\star) + \nabla_u D^{\nu}(u^{\star},y) , u - u^{\star}\rangle \geq 0,\] with the gradient taken with respect to the first entry. We also apply the three-points identity
\[\langle \nabla D^{\nu}(u^{\star},y) , u - u^{\star}\rangle = D^{\nu}(u,y)- D^{\nu}(u,u^{\star})- D^{\nu}(u^{\star},y).\]
Then
\begin{align*}
  f(u) - f(u^{\star}) - D^{f}(u,u^{\star}) & = \langle \nabla f(u^\star) , u - u^{\star}\rangle \\
    & \geq D^{\nu}(u,u^{\star})- D^{\nu}(u,y)  +D^{\nu}(u^{\star},y).
\end{align*}
\end{proof}
We note that Lan's work \cite{GL12} follows a similar route, except for a negative Bregman divergence term upper bounded by zero, that we maintain here. We now introduce an extension of Lemma 4 in \cite{Dragomir2021FastSB} in the case of relative smoothness.

\begin{lemma}  \label{lem:prox-free}
Let $f$ be a convex function and $\nu$ continuously differentiable. If we consider  
\[ 
u^{\star} = \arg\min_{u\in {\mathcal{ X}}} \{f(u) + D^{\nu}(u,y)\}, 
\]
with the Bregman divergence $D^{\nu}(a,b) := \nu(a) - \nu(b) -\langle \nabla \nu(b),a-b\rangle$ and $u^\star$ is in the interior of $\mathcal{X}$, then for all $u \in \mathcal{X} $ ,
\[
f(u^{\star}) - f(u) + D^{\nu}(u^\star,y) + D^{f}(u,u^\star) =   D^{\nu}(u,y) - D^{\nu}(u,u^\star). 
\]
\end{lemma}
\begin{proof}
The proof is similar as before with only difference of $u^\star$ being in the interior of $\mathcal{X}$, 
\[\nabla f(u^\star) + \nabla D^{\nu}(u^{\star},y) = 0. \] The rest follows.
\end{proof}

For the relative smooth case, it is also to show the Lemma 3 from \cite{Dragomir2021FastSB} on thes property and cocoercivity of the Bregman divergence.

\begin{lemma} \label{lem:cocoercivity}
Consider two convex functions $F,H:\mathcal{X} \xrightarrow{} \mathbb{R}$ where $F$ is $L_\star$-relatively smooth $H$. For any $ 0< \eta< \frac{1}{L_\star}$ and $x,y \in int(\mathcal{X})$, we have
\[   D^{H_\star}\Big( \nabla H(x) - \eta(\nabla F(x) - \nabla F(y)) , \nabla H(x) \Big)  \leq \eta D^F(x,y) \]
where $H_\star$ is the convex conjugate of $H$.
\end{lemma}
\begin{proof}
For all $y \in int(\mathcal{X})$, we note
\begin{align*}
D^F( \cdot ,y) = F(\cdot) - F(y) - \langle \nabla F(y), \cdot - y\rangle
\end{align*}
Therefore, for any $\eta \leq \frac{1}{L_\star}$, $y,x,u \in int(\mathcal{X})$ we know by the definition of relative smoothness
\begin{align*}
D^F(u,x) = D^{D^F(\cdot,y)}(u,x) = D^F(u,y) - D^{F}(x,y) - \langle \nabla_x D^F(x,y), u-x \rangle \leq \frac{1}{\eta} D^H(u,x)
\end{align*}
We notice that the function $Q_{y,x}: u \xrightarrow{} \frac{1}{\eta} D^H(u,x) + D^{F}(x,y) + \langle \nabla_x D^F(x,y), u-x \rangle $ is convex, greater than $D^F(u,y)$. We note $u^{+}$ as the minimum of $Q_{y,x}$, the gradients cancel at this point:
\begin{align*}
\frac{1}{\eta} \Big( \nabla H(u^{+}) - \nabla H(x) \Big) + \nabla_x D^F(x,y) & = 0 \\
\Leftrightarrow \nabla H(u^{+}) = \nabla H(x) - \eta \nabla_x D^F(x,y) & = \nabla H(x) - \eta \nabla F(x) + \eta \nabla F(y) 
\end{align*}
By the previous inequality, we know that for any $\eta \leq \frac{1}{L_\star}$,
\begin{align*}
0 \leq D^F(u^{+},y) \leq & Q_{y,x}(u^{+}) \\
= & \frac{1}{\eta} D^H(u^{+},x) + D^{F}(x,y) + \langle \nabla_x D^F(x,y), u^{+}-x \rangle \\
= & \frac{1}{\eta} D^H(u^{+},x) + D^{F}(x,y) + \frac{1}{\eta} \langle  \nabla H(x) - \nabla H(u^{+}) , u^{+}-x \rangle \\
= & D^{F}(x,y) - \frac{1}{\eta} D^H(x, u^{+}) 
\end{align*}
We conclude by noticing that
\begin{align*}
D^H(x, u^{+}) &  = D^{H_\star}(\nabla H(u^{+}),\nabla H(x))  \\
& = D^{H_\star}(\nabla H(x) - \eta \nabla F(x) + \eta \nabla F(y) ,\nabla H(x)).
\end{align*}

\end{proof}

\section{Composite Mirror Descent}
\label{section-composite-mirror descent}
Our approach combines stochastic mirror descent with dual averaging to minimize convex functions without prior knowledge about the problem. The general analysis takes inspiration from existing methods like minimization in relative scale \cite{nesterov2023randomized}, the composite setting \cite{daspremont2022optimal}, and relative smoothness \cite{Dragomir2021FastSB}. In the following part, we will present three variations of our algorithm in these various settings.

\subsection{Relative scale setting}
We recall that in the relative scale setting, there exist constants $\Gamma, \mathcal{M} > 0 $ and a fixed point $\hat{x}_0$ such that for all $x \in \mathcal{X}$,
\begin{align*}
\Gamma | H(x) - H(\hat{x}_0) |  & \leq F(x) \\
\mathbb{E} \Big[ \| \nabla f_{\xi} (x, \xi)\|_{\ast}^2 \Big] & \leq  \mathcal{M}  F(x). 
\end{align*}
We notice that in the Euclidean case where $H(x) = \| \cdot \|_2^2$, our setting recovers that described in \cite{nesterov2023randomized}, where
\begin{align*}
\Gamma \| x - x_\star \|_2^2 \leq F(x).
\end{align*}
The first line is an extension of the quadratic lower bound assumption over the objective function and the second line controls the stochastic oracle noise using the objective function. In order to solve the previous problem without prior knowledge on the constants $\Gamma,\mathcal{M}$ or the location of $\hat{x}_0$, we have designed the following composite mirror descent algorithm. 
\begin{algorithm}[H]
\caption{Composite Mirror Descent for Relative scale}
\label{Algo Power SMD}
\begin{algorithmic}
\Require Number of iterations $T \geq 0$, starting point $x_1 \in \mathcal{X}$, step-sizes $(\alpha_t,\beta_t, \gamma_t)_{t\geq 1}$, regularizer $H$ 
\For{$ 1 \leq t \leq T$} 
\begin{equation}
 x_{t+1}  = \arg\min_{x\in \mathcal{X}} \{ \alpha_t  \langle \nabla f_{\xi_t}(x_t) , x \rangle + \beta_t H(x)  +  \gamma_t D^{H}(x,x_{t}) \}
\end{equation}
\EndFor
\State Output $x_{t}^{ag}:=  \frac{\sum_{t=1}^{T}  \alpha_{t} x_{t}}{\sum_{t=1}^{T} \alpha_{t}}$. 
\end{algorithmic}
\end{algorithm}
Unlike in the original setting, we will also suppose that $F$ is explicitly bounded by an a priori unknown constant $\mathcal{L}$. Our convergence result is similar to existing results in \cite{NJLS09,JN14,daspremont2022optimal} with some extra terms due to the new constraints.
\begin{theorem} \label{thm relative power smd}
Assume that $H$ is $1$-strongly-convex with regard to $\| \cdot \|$ and there exist $\Gamma, \mathcal{M}, \mathcal{L}>0$ such that for all $x \in \mathcal{X}$,
\begin{align*}
\Gamma (H(x) - \min H) & \leq F(x) \\
\mathbb{E} \Big[ \| \nabla f_{\xi} (x, \xi)\|_{\ast}^2 \Big] & \leq  \mathcal{M}  F(x) \\
F(x) - F_\star & \leq \mathcal{L}.
\end{align*}
Suppose Algorithm \ref{Algo Power SMD} runs under the oblivious step-size (\ref{oblivious-stepsize}). For any $\mu$ satisfying $ 2\mathcal{M} n /T \leq \mu \leq \frac{\Gamma}{2} $, we have
\begin{equation}
\begin{aligned}
 \mathbb{E}  \left[ F(x_{T}^{ag})\right]  - F_\star   = O  \left(  \Big( \frac{\mathcal{M}n}{\mu T}  + \frac{\mu}{\Gamma} \Big)  F_\star +  \frac{\mu (H(\hat{x}_0) - H_\star) }{T}  + \mathcal{L}\Big( \frac{\mathcal{M}n}{\mu T}\Big)^{n+1} + \frac{\mu D^{H}(x_F,x_{1}) }{T^{n+1}}  \right) 
\end{aligned}
\end{equation}
\end{theorem}
We note that the first two terms with $F_\star$ and $H_\star$ are part of the standard result for optimization in relative scale. A large part of the proof follows the same scheme as \cite{GL12II}, \cite{nesterov2023randomized} and \cite{daspremont2022optimal}.
\begin{proof}
To simplify notation, we write $g_t = \nabla f_{\xi_t} (x_t)$. By the proximal lemma (Lemma \ref{lem:prox}) applied to $u=x$, $y=x_t$,  $u^{\star}=x_{t+1}$,  $f(\cdot) = \alpha_t \langle g_t , \cdot \rangle + \beta_t H(\cdot)  $, and $\nu(\cdot)= \mu \gamma_t  H(\cdot)$, we have for all $x\in \mathcal{X}$
\begin{align*}
 \alpha_t \langle g_t , x_{t+1} - x \rangle + \beta_t H(x_{t+1}) - \beta_t H(x) +  \beta_t D^{H}(x,x_{t+1}) \\
\leq  \gamma_t \Big( D^{H}(x, x_{t}) - D^{H}(x,x_{t+1}) \Big) -  \gamma_t D^{H}( x_{t+1}, x_t).
\end{align*}
We write the stochastic noise by $\Delta_t(x_t) := g_t - \nabla F(x_t)$. By convexity of $F$, we obtain
\begin{equation*}
\begin{aligned}
\langle g_t , x_{t+1} - x \rangle & = \langle \nabla F(x_t) + \Delta_t(x_t) , x_{t} - x \rangle + \langle g_t , x_{t+1} - x_t \rangle  \\ 
& \geq F(x_t) - F(x) + \langle \Delta_t(x_t) , x_{t} - x \rangle + \langle g_t , x_{t+1} - x_t \rangle  \\ 
\end{aligned}
\end{equation*}
Combining the two previous equations, we know for all $x \in \mathcal{X}$
\begin{align*}
 \alpha_t \Big( F(x_t) - F(x) + \langle \Delta_t(x_t) , x_{t} - x \rangle + \langle g_t , x_{t+1} - x_t \rangle  \Big) + \beta_t \Big[  H(x_{t+1}) -  H(x)\Big] +  \beta_t D^{H}(x,x_{t+1}) \\
\leq   \gamma_t \Big( D^{H}(x, x_{t}) - D^{H}(x,x_{t+1}) \Big) -  \gamma_t D^{H}( x_{t+1}, x_t).
\end{align*}
After rearranging the terms, we have 
\begin{align*}
 \alpha_t \Big( F(x_t) - F(x) \Big)  & \leq   \alpha_t \langle g_t , x_t- x_{t+1}  \rangle   -  \gamma_t D^{H}( x_{t+1}, x_t)  \\ 
& + \gamma_t  D^{H}(x, x_{t}) - (\beta_t + \gamma_t) D^{H}(x,x_{t+1}) \\
& +  \alpha_t \langle \Delta_t(x_t) , x- x_{t}  \rangle +   \beta_t \Big[  H(x) - H(x_{t+1}) \Big] 
\end{align*}
We notice that, similarly to the standard stochastic mirror descent proof \cite{daspremont2022optimal}, the first two terms can be simplified by using the strong convexity of $H$ and Young's inequality
\begin{align*}
\alpha_t \langle g_t , x_t - x_{t+1}   \rangle -  \gamma_t D^{H}( x_{t+1}, x_t) & \leq \alpha_t \langle g_t , x_t - x_{t+1}   \rangle - \frac{\gamma_t}{2} \|x_{t+1} - x_t \|^{2} \\
& \leq  \frac{\alpha_t^2}{2 \gamma_t} \| g_t\|_{\ast}^2
\end{align*}
Summing the previous equation from $t=1$ to $t=T$, for all $x \in \mathcal{X}$
\begin{equation}
\label{decomposition-1}
\begin{aligned}
& \sum_{t=1}^{T} \alpha_t [ F(x_{t}) - F(x)  ] 
\leq \gamma_{1} D^{H}(x,x_{1}) -  (\beta_T + \gamma_T) D^{H}(x,x_{T+1}) \\
 & + \sum_{t=1}^{T-1} \underbrace{(\gamma_{t+1} - \gamma_{t} -\beta_t)}_{\leq 0} D^{H}(x,x_{t+1}) 
+  \sum_{t=1}^{T} \alpha_t \langle \Delta_{t}(x_t) , x -x_{t}  \rangle + \sum_{t=1}^{T} \frac{\alpha_t^2 \| g_t \|_{\ast}^{2}}{ \gamma_t} \\
& +  \sum_{t=1}^{T} \beta_t \Big( H(x) - H(x_{t+1}) \Big) .
\end{aligned}
\end{equation}
We decompose the regularizer term $H(x) - H(x_{t+1})$ in the following way 
\begin{align*}
H(x) - H(x_{t+1})  & = \Big( H(\hat{x}_0) - H(x_{t+1}) \Big) +  \Big( H(x) - H(\hat{x}_{0}) \Big) \\
& \leq  \frac{F(x_{t+1}) + F(x) }{\Gamma}  
\end{align*}
where we have used the relative scale setting $ | H(x) - H(\hat{x}_0) | \leq {F(x)}/{\Gamma}$ in the last line. Exceptionally for the iterate $t=T$, we use another decomposition
\[ H(x) - H(x_{T+1}) \leq H(\hat{x}_0) - H(x_{T+1}) + \frac{F(x)}{\Gamma} \leq H(\hat{x}_0) - \min H + \frac{F(x)}{\Gamma} \]
After introducing these decompositions in equation \eqref{decomposition-1}, we simplify and obtain
\begin{equation*}
\begin{aligned}
& \sum_{t=1}^{T} \alpha_t [ F(x_{t}) - F(x)  ]  \leq  \gamma_{1} D^{H}(x,x_{1})  + \frac{F(x)}{\Gamma} \sum_{t=1}^{T} \beta_t +  \sum_{t=1}^{T-1} \frac{\beta_t }{\Gamma} F(x_{t+1})  \\
&    + \beta_T (H(\hat{x}_0) - \min H ) +  \sum_{t=1}^{T} \alpha_t  \langle \Delta_{t}(x_t) , x -x_{t}  \rangle + \sum_{t=1}^{T} \frac{\alpha_t^2 \| g_t \|_{\ast}^{2}}{ \gamma_t} .
\end{aligned}
\end{equation*}
From now on, we fix $x = x_F := \arg\min F $ and consider the expectation
\begin{equation*}
\begin{aligned}
& \sum_{t=1}^{T} \alpha_t \Big(  \mathbb{E}    F(x_{t}) - F_\star  \Big)  \leq  \gamma_{1} D^{H}(x_F,x_{1})  + \frac{2F_\star}{\Gamma} \sum_{t=1}^{T} \beta_t  \\
& + \beta_T \Big(H(\hat{x}_0) - \min H - \frac{F_\star}{\Gamma} \Big)  +  \sum_{t=1}^{T-1} \frac{\beta_t}{\Gamma} \Big(  \mathbb{E}    F(x_{t+1}) - F_\star  \Big)+  \sum_{t=1}^{T} \frac{\alpha_t^2 \mathbb{E} \| g_t \|_{\ast}^{2}}{ \gamma_t} .
\end{aligned}
\end{equation*}
We also notice that $ H(\hat{x}_0) - \min H  \leq \frac{F(x_H)}{\Gamma}$ if we write $x_H = \arg \min_{x \in \mathcal{X}} H(x)$. We rearrange the previous equation in the following way
\begin{equation}
\label{relative-scale-step-1}
\begin{aligned}
 & \sum_{t=1}^{T} \frac{\alpha_t}{2}  \Big(  \mathbb{E}    F(x_{t}) - F_\star  \Big) \\
 \leq & \gamma_{1} D^{H}(x_F,x_{1}) +  \frac{2F_\star}{\Gamma}  \sum_{t=1}^{T} \beta_t +  \sum_{t=1}^{T} \frac{\alpha_t^2 \mathbb{E} \| g_t \|_{\ast}^{2}}{ \gamma_t} + \beta_T \Big(H(\hat{x}_0) - \min H  -\frac{ F_\star}{\Gamma} \Big) \\
 + & \sum_{t=1}^{T-1} \frac{\beta_{t-1}}{\Gamma}  \Big(  \mathbb{E}    F(x_{t}) - F_\star  \Big) - \sum_{t=1}^{T} \frac{\alpha_t}{2}  \Big(  \mathbb{E}    F(x_{t}) - F_\star  \Big) \\
 \leq &  \gamma_{1}  D^{H}(x_F,x_{1}) +  \frac{2F_\star}{\Gamma}  \sum_{t=1}^{T} \beta_t   +  \sum_{t=1}^{T} \frac{\alpha_t^2}{ \gamma_t} \Big( \mathcal{M} \mathbb{E} F(x_t) \Big) + \beta_T \Big(H(\hat{x}_0) - \min H  -\frac{ F_\star}{\Gamma} \Big)  \\ 
 + & \sum_{t=1}^{T-1} \frac{\beta_{t-1}}{\Gamma}  \Big(  \mathbb{E}    F(x_{t}) - F_\star  \Big) - \sum_{t=1}^{T} \frac{\alpha_t}{2}  \Big(  \mathbb{E}    F(x_{t}) - F_\star  \Big) \\
 \leq & \gamma_{1}  D^{H}(x_F,x_{1}) +  \frac{2F_\star}{\Gamma}  \sum_{t=1}^{T} \beta_t  + \sum_{t=1}^{T} \frac{\alpha_t^2}{ \gamma_t} \Big( \mathcal{M}  F_\star \Big) + \beta_T \Big(H(\hat{x}_0) - \min H  -\frac{ F_\star}{\Gamma} \Big)   \\
 & +   \sum_{t=1}^{T} \Big( \frac{\mathcal{M}\alpha_t^2}{ \gamma_t} - \frac{\alpha_t}{4} \Big) \Big(  \mathbb{E}F(x_t) - F_\star \Big) + \sum_{t=1}^{T-1} \Big( \frac{\beta_{t-1}}{\Gamma} - \frac{\alpha_t}{4} \Big) \Big(  \mathbb{E}    F(x_{t}) - F_\star  \Big)
\end{aligned}
\end{equation}
Now for any $n \geq 0$, if we consider the polynomial step-size $\alpha_t \propto t^n, \beta_t \propto \mu t^n, \gamma_{t} \propto \mu t^{n+1}/n $, we notice the two terms in the last line will become negative when $t$ grows. More precisely, we note that
\begin{align*}
\frac{\mathcal{M} \alpha_t^2 }{ \gamma_t}  \leq \frac{\alpha_t}{4} & \Leftrightarrow 4\mathcal{M}  \leq \frac{\gamma_t}{ \alpha_t} \propto \frac{\mu t}{n}  \\
\frac{\alpha_t}{4} \geq \frac{\beta_{t-1}}{\Gamma}& \Leftrightarrow t^n \geq \frac{4\mu}{\Gamma} (t-1)^n
\end{align*}
Since we supposed that the choice of $\mu$ satisfies the regime $ 2\mathcal{M}n /T \leq \mu \leq \frac{\Gamma}{2} $, we know there exists a transition time $T_0 \propto \lceil  \mathcal{M}n /\mu \rceil $ where the last two terms become negative. Now if we introduce the transition time in \eqref{relative-scale-step-1}, we obtain
\begin{align*}
 \mathbb{E} \left[ \sum_{t=1}^{T} t^n \Big( F(x_{t}) - F_\star  \Big) \right]  & \lesssim  \mu   \left( \frac{1}{n} D^{H}(x_F,x_{1})  +    \frac{F_\star}{\Gamma}  \sum_{t=1}^{T-1} t^n   \right) +   \sum_{t=1}^{T} \frac{  \mathcal{M} n t^{2n}}{ \mu t^{n+1}}  F_\star  
 \\
  &  + \mu \left( H(\hat{x}_0) - H_\star - \frac{F_\star}{\Gamma} \right)T^n + \mathcal{L} \sum_{t=1}^{T_0} \frac{  \mathcal{M} n t^{2n}}{ \mu t^{n+1}}  
\end{align*}
We note that 
\[ \sum_{t=1}^{T_0} \frac{  \mathcal{M} n t^{2n}}{ \mu t^{n+1}}  \propto \frac{\mathcal{M} n}{\mu} \sum_{t=1}^{T_0} t^{n-1} \propto \frac{\mathcal{M} }{\mu}  T_0^{n} \]
We conclude by using the convexity of $F$ with $x_T^{ag} =\frac{\sum \alpha_t x_t }{\sum \alpha_t}$
\begin{align*}
 \mathbb{E}  \left[ F(x_{T}^{ag})\right]  - F_\star   = O  \left(  \Big( \frac{\mathcal{M}n}{\mu T} + \frac{\mu}{\Gamma} \Big)  F_\star + \frac{\mu (H(\hat{x}_0) - H_\star) }{T}   + \mathcal{L}\Big( \frac{\mathcal{M}n}{\mu T}\Big)^{n+1} + \frac{\mu D^{H}(x_F,x_{1}) }{T^{n+1}}  \right)  
\end{align*}
\end{proof}
Note that if $\hat{x}_0$ is known, the second term disappears by choosing $H$ centered at $\hat{x}_0$. We illustrate the previous result with concrete choices of $(n,\mu)$ in the next corollary.
\begin{corollary}
Under the previous assumption of relative scale setting, we choose $(n,\mu) = \Big(3,\frac{1}{\sqrt{T}} \Big)$.  
If $T$ is large enough i.e. $T \geq \max \Big( 36 \mathcal{M}^2, \frac{4}{\Gamma^2} \Big)$ then our algorithm yields,
\begin{equation}
\begin{aligned}
 \mathbb{E}  \left[ F(x_{T}^{ag})\right]  - F_\star   = O  \left(  \Big( \mathcal{M} + \frac{1}{\Gamma} \Big) \frac{F_\star}{\sqrt{T}} + \frac{H(\hat{x}_0) - H_\star}{\sqrt{T}}   + \frac{\mathcal{L} \mathcal{M}^4}{T^2} + \frac{\mu D^{H}(x_F,x_{1}) }{T^4}  \right)   
\end{aligned}
\end{equation}
\end{corollary}
\begin{proof}
We apply the previous Theorem and verify that for $\mu = \frac{1}{\sqrt{T}}$ and $n=3$, 
\[ 2\mathcal{M}n /T \leq \mu \leq \frac{\Gamma}{2} \quad \Leftrightarrow \frac{6\mathcal{M}}{T} \leq \frac{1}{\sqrt{T}} \leq \frac{\Gamma}{2} \quad \Leftrightarrow \frac{36\mathcal{M}}{T} \leq 1 \leq \frac{\Gamma^2 T}{4} \]
where we multiply by $T$ in the last step.
\end{proof}

\subsection{Accelerated Composite Mirror descent}

For the smooth setting, we use the acceleration framework from \cite{daspremont2022optimal} with step-size \eqref{oblivious-stepsize}.
\begin{algorithm}
\caption{Accelerated Composite Mirror Descent}
\label{Algo ACSMD}
\begin{algorithmic}
\Require Number of iterations $T \geq 0$, starting point $x_1 \in \mathcal{X}$, step-sizes $(\alpha_t,\beta_t, \gamma_t)_{t\geq 1}$, regularizer $H$ 
\For{$ 1 \leq t \leq T$}
\State Compute $x_{t}^{md} = \frac{A_{t-1}}{A_{t}} x_{t}^{ag} + \frac{\alpha_{t}}{A_{t}} x_{t}$

\State Compute $x_{t+1} = \arg\min_{x\in \mathcal{X}} \{\alpha_t \langle \nabla f_{\xi_t}(x_{t}^{md}) , x \rangle +  \beta_t H(x) + \gamma_t D^{H}(x,x_{t}) \}$

\State Compute $x_{t+1}^{ag} = \frac{A_{t-1}}{A_{t}} x_{t}^{ag} + \frac{\alpha_{t}}{A_{t}} x_{t+1}$

\EndFor
\State Output $x_{t+1}^{ag}$. 
\end{algorithmic}
\end{algorithm}

As in the relative case, we show the convergence of the accelerated algorithm, by introducing a burn in time $T_0$.

\begin{theorem} \label{thm smooth}
Assume that $H$ is $1$-strongly-convex with regard to $\| \cdot \|$ and that there exist $D, D_H, L, \sigma >0$ such that for all $x,y \in \mathcal{X}$,
\begin{align*}
D^{F}(x,y) & \leq \frac{L}{2} \|x-y\|^2 \\
\mathbb{E}_{\xi} \Big[ \| \nabla f_{\xi}(x) - \nabla F(x) \|_{\ast}^2 \Big] & \leq \sigma^2 \\
\| x- y\|^2 & \leq D^2 \\
H(x_F) - H(x) & \leq D^2
\end{align*}
where $x_F$ is the minimum of $F$. Suppose Algorithm \ref{Algo ACSMD} runs under the oblivious step-size~(\ref{oblivious-stepsize}). For any $\mu >0$, if $ 2 \sqrt{L/\mu} \leq T $, then
\begin{equation}
\begin{aligned}
 \mathbb{E} [F(x_{T+1}^{ag})]- F_\star  =O \left( \frac{\mu  D^{H}(x_F,x_{1})}{nT^{n+1}} + \mu D^2 +  \frac{n \sigma^2}{\mu T }+   \frac{LD^2}{T} \Big( \frac{Ln}{\mu T^2}\Big)^{n/2} \right)  
\end{aligned}
\end{equation}
\end{theorem}
\begin{proof}
Applying the proximal lemma to $u=x$, $y=x_t$, $u^{\star}=x_{t+1}$, $f(\cdot) = \alpha_t  \langle \nabla f_{\xi_t}(x_t^{md}),\cdot \rangle + \beta_t H(\cdot)  $, and $\nu(\cdot)=  \gamma_t H(\cdot)$, we have for all $x \in \mathcal{X}$
\begin{align*}
& \alpha_t \langle \nabla f_{\xi_t}(x_t^{md}), x_{t+1} \rangle + \beta_t H(x_{t+1})  +  \gamma_t D^{H}( x_{t+1}, x_t) +  \beta_t D^{H}(x,x_{t+1}) \\
\leq & \alpha_t \langle \nabla f_{\xi_t}(x_t^{md}),x \rangle + \beta_t H(x)  +  \gamma_t [D^{H}(x, x_{t}) - D^{H}(x,x_{t+1})] 
\end{align*}
After rearranging, we have
\begin{equation}
\label{decomposition-2}
\begin{aligned}
& \alpha_t \langle \nabla F(x_t^{md}), x_{t+1} - x  \rangle + \alpha_t \langle \Delta(x_{t}^{md}) , x_{t+1} - x  \rangle   + \beta_t H(x_{t+1}) - \beta_t H(x)  \\
= &  \alpha_t \langle \nabla f_{\xi_t}(x_t^{md}), x_{t+1} - x  \rangle + \beta_t H(x_{t+1}) - \beta_t H(x)    \\
\leq &   \gamma_t D^{H}(x, x_{t}) -  (\beta_t + \gamma_t) D^{H}(x,x_{t+1}) - \gamma_t D^{H}( x_{t+1}, x_t) 
\end{aligned}
\end{equation}
where the stochastic noise is defined as $ \Delta(x_{t}^{md}) = \nabla f_{\xi_t}(x_t^{md}) - \nabla F(x_t^{md}) $. Now we split the gradient term with $\nabla F(x_t^{md})$ in the following way
\begin{align*}
 & \alpha_t \langle \nabla F(x_t^{md}), x_{t+1} - x  \rangle \\
 = &  \alpha_t \langle \nabla F(x_t^{md}), x_{t+1} - x_t^{md}  \rangle + \alpha_t \langle \nabla F(x_t^{md}), x_t^{md} - x  \rangle \\
 = &  \alpha_t \langle \nabla F(x_t^{md}), x_{t+1} - x_t^{md}  \rangle + \alpha_t \Big( F(x_t^{md}) - F(x) + D^F(x,x_t^{md}) \Big)
\end{align*}
The Bregman divergence $ D^F(x,x_t^{md})$ is positive because $F$ is convex, therefore we can reformulate \eqref{decomposition-2} in the following way
\begin{equation*}
\begin{aligned}
& \alpha_t \Big( F(x_t^{md}) - F(x) +   \langle \nabla F(x_t^{md}), x_{t+1} - x_t^{md}  \rangle  \Big) + \beta_t H(x_{t+1}) - \beta_t H(x) \\
\leq & \alpha_t \langle \Delta(x_{t}^{md}) , x- x_{t+1}  \rangle   +  \gamma_t D^{H}(x, x_{t}) -  (\beta_t + \gamma_t) D^{H}(x,x_{t+1}) - \gamma_t D^{H}( x_{t+1}, x_t) 
\end{aligned}
\end{equation*}
For the acceleration step, we recall that $x_{t+1}^{ag} = \frac{A_{t-1}}{A_t} x_t^{ag} + \frac{\alpha_t}{A_t} x_{t+1}$. From the smoothness of $F$, we know 
\begin{align*}
F(x_{t+1}^{ag}) & \leq F(x_{t}^{md}) + \langle \nabla F(x_{t}^{md}) , x_{t+1}^{ag} -x_{t}^{md} \rangle + \frac{L}{2} \| x_{t+1}^{ag} -x_{t}^{md}  \|^2 \\
&= \textstyle F(x_{t}^{md}) + \langle \nabla F(x_{t}^{md}) ,\frac{A_{t-1}}{A_{t}} x_{t}^{ag} + \frac{\alpha_{t}}{A_{t}} x_{t+1} -x_{t}^{md} \rangle + \frac{L}{2} \frac{\alpha_{t}^2}{A_{t}^2} \| x_{t+1} -x_t  \|^2  \\
& = \frac{A_{t-1}}{A_{t}} \left[F(x_{t}^{md}) +   \langle \nabla F(x_{t}^{md}) , x_{t}^{ag} -x_{t}^{md}  \rangle \right]  \\
& + \frac{\alpha_{t}}{A_{t}}  \left[ F(x_{t}^{md}) + \langle \nabla F(x_{t}^{md}) ,  x_{t+1} -x_{t}^{md}\rangle \right]   + \frac{L}{2} \frac{\alpha_{t}^2}{A_{t}^2} \| x_{t+1} -x_t  \|^2, 
\end{align*}
by scaling every term by $A_t$, we know that for all $x \in \mathcal{X}$,
\begin{align*}
& A_t \Big( F(x_{t+1}^{ag}) - F(x) \Big) \\
\leq & A_{t-1} \Big( F(x_t^{md}) - F(x)  +  \langle \nabla F(x_{t}^{md}) , x_{t}^{ag} -x_{t}^{md}  \rangle \Big) \\
+ & \alpha_t \Big( F(x_t^{md}) - F(x) +   \langle \nabla F(x_{t}^{md}) , x_{t+1} -x_{t}^{md}  \rangle \Big) + \frac{L}{2} \frac{\alpha_{t}^2}{A_{t}} \| x_{t+1} -x_t  \|^2
\end{align*}
Combining with previous equations, we have
\begin{equation}
\begin{aligned}
& A_t \Big( F(x_{t+1}^{ag}) - F(x) \Big) + \beta_t \Big( H(x_{t+1}) - H(x) \Big) \\
\leq & A_{t-1} \Big( F(x_t^{md}) - F(x)  +  \langle \nabla F(x_{t}^{md}) , x_{t}^{ag} -x_{t}^{md}  \rangle \Big) + \frac{L}{2} \frac{\alpha_{t}^2}{A_{t}} \| x_{t+1} -x_t  \|^2 \\
+ & \alpha_t \langle \Delta(x_{t}^{md}) , x- x_{t+1}  \rangle   +  \gamma_t D^{H}(x, x_{t}) -  (\beta_t + \gamma_t) D^{H}(x,x_{t+1}) - \gamma_t D^{H}( x_{t+1}, x_t) 
\end{aligned}
\end{equation}
We also notice that by the convexity of $F$
\begin{align*}
\langle \nabla F(x_{t}^{md}) , x_{t}^{ag} -x_{t}^{md}  \rangle \leq F(x_t^{ag}) - F(x_t^{md})
\end{align*}
which implies 
\begin{equation}
\begin{aligned}
& A_t \Big( F(x_{t+1}^{ag}) - F(x) \Big) + \beta_t \Big( H(x_{t+1}) - H(x) \Big) \\
\leq & A_{t-1} \Big( F(x_t^{ag}) - F(x)  \Big) + \frac{L}{2} \frac{\alpha_{t}^2}{A_{t}} \| x_{t+1} -x_t  \|^2 + \alpha_t \langle \Delta(x_{t}^{md}) , x- x_{t}  \rangle \\
+ & \alpha_t \langle \Delta(x_{t}^{md}) , x_t - x_{t+1}  \rangle   +  \gamma_t D^{H}(x, x_{t}) -  (\beta_t + \gamma_t) D^{H}(x,x_{t+1}) - \gamma_t D^{H}( x_{t+1}, x_t).
\end{aligned}
\end{equation}
We use the strong convexity of $H$ and split the term into two parts
\[ \gamma_t D^{H}( x_{t+1}, x_t) \geq \frac{\gamma_t}{4} \| x_{t+1} - x_t\|^2 + \frac{\gamma_t}{4} \| x_{t+1} - x_t\|^2, \]
and we apply Young's inequality 
\[ \alpha_t \langle \Delta(x_{t}^{md}) , x_{t} -x_{t+1}  \rangle - \frac{\gamma_t}{4} \| x_{t+1} - x_t\|^2 \leq \frac{2\alpha_t^2}{  \gamma_t} \| \Delta(x_{t}^{md})\|_\ast^2. \]
We use the other $\gamma_t/4$ to balance the $L\alpha_t^2/2A_t$ term. In the end we obtain for all $x\in \mathcal{X}$
\begin{equation}
\begin{aligned}
& A_t \Big( F(x_{t+1}^{ag}) - F(x) \Big) - A_{t-1} \Big( F(x_t^{ag}) - F(x)  \Big)  \\
\leq & \beta_t \Big( H(x_{t+1}) - H(x) \Big) +  \Big( \frac{L}{2} \frac{\alpha_{t}^2}{A_{t}} - \frac{\gamma_t}{4} \Big) \| x_{t+1} -x_t  \|^2 + \alpha_t \langle \Delta(x_{t}^{md}) , x- x_{t}  \rangle \\
+ &   \gamma_t D^{H}(x, x_{t}) -  (\beta_t + \gamma_t) D^{H}(x,x_{t+1})  + \frac{2\alpha_t^2}{\gamma_t} \| \Delta(x_{t}^{md})\|_\ast^2 
\end{aligned}
\end{equation}
Adding all terms from $t=1$ to $t=T$, we obtain
\begin{align*}
&  A_T \Big( \left[F(x_{t+1}^{ag}) \right] - F(x) \Big) +  \gamma_{T} D^{H}(x, x_{T+1})  \\
\leq &  \gamma_1 D^{H}(x,x_{1}) + \sum_{t=1}^{T} \alpha_t \langle \Delta(x_{t}^{md}) , x -x_{t}  \rangle +  \sum_{t=1}^{T} \beta_t \Big( H(x)- H(x_{t+1}) \Big) \\
+ & \sum_{t=1}^{T} \frac{2\alpha_t^2}{\gamma_t} \| \Delta(x_{t}^{md})\|_\ast^2 + \sum_{t=1}^{T} \Big( \frac{L}{2} \frac{\alpha_{t}^2}{A_t} - \frac{\gamma_t}{4}\Big) \| x_{t+1} - x_t\|^2  \\
+ & \sum_{t=1}^{T-1} \left( \gamma_{t+1} - \gamma_{t}-\beta_t \right) D^{H}(x,x_{t+1}).
\end{align*}
We know that the conditional expectation on $t$ for $ \mathbb{E} [ \alpha_t \langle \Delta(x_{t}^{md}) , x -x_{t}  \rangle ] = 0$. By the tower law, it disappears when we consider the sum of the expectation. By fixing 
 $x = x_F := \arg\min F $, we obtain
\begin{equation}
\begin{aligned}
& A_T \Big( \mathbb{E} \left[F(x_{t+1}^{ag})\right]  - F_\star \Big) \\
\leq &  \gamma_1 D^{H}(x_F,x_{1}) +  \sum_{t=1}^{T} \beta_t \mathbb{E} \Big( H(x_F)- H(x_{t+1}) \Big) + \sum_{t=1}^{T} \frac{2 \sigma^2 \alpha_t^2}{\gamma_t}  \\
+ &  \sum_{t=1}^{T} \Big( \frac{L}{2} \frac{\alpha_{t}^2}{A_t} - \frac{\gamma_t}{4}\Big) \| x_{t+1} - x_t\|^2  + \sum_{t=1}^{T-1} \left( \gamma_{t+1} - \gamma_{t}-\beta_t \right) D^{H}(x_F,x_{t+1}).
\end{aligned}
\end{equation}
Recall that for all $x,y \in \mathcal{X}$, $H(x_{F}) - H(x), \| x-y\| \leq D^2$. Now for any $n \geq 0$, if we consider the polynomial step-size $\alpha_t \propto t^n, \beta_t \propto \mu t^n, \gamma_{t} \propto \mu t^{n+1}/n $, we look at the transition time $T_0 :=  \arg\max_{t} \{ 2L \leq \frac{\gamma_t A_t}{\alpha_{t}^2} \}$. For $\mu > 0$,
\begin{align*}
2L \lesssim \frac{\gamma_t A_t}{\alpha_t^2} 
\Leftrightarrow \quad  2L \lesssim \frac{\mu t^{2n+2}}{n^2 t^{2n}}  \Leftrightarrow \quad \frac{\sqrt{2L}n}{\sqrt{\mu}}  \lesssim t  
\end{align*}
Therefore $T_0 \propto n \sqrt{\frac{L}{\mu}}  $ and we have,
\begin{equation}
\begin{aligned}
& A_T \Big( \mathbb{E}  \left[F(x_{t+1}^{ag})\right] - F_\star \Big) \\
\leq &  \gamma_1 D^{H}(x_F,x_{1})  + D^2 \sum_{t=1}^{T} \beta_t + D^2 \sum_{t=1}^{T_0} \Big( \frac{L}{2} \frac{\alpha_{t}^2}{A_t} - \frac{\gamma_t}{4}\Big)    \\
+ &  \sum_{t=1}^{T} \frac{2 \sigma^2 \alpha_t^2}{\gamma_t} + \sum_{t=1}^{T-1} \left( \gamma_{t+1} - \gamma_{t}-\beta_t \right) D^{H}(x_F,x_{t+1}).
\end{aligned}
\end{equation}
The remaining step is to simplify the transition time term. We note that
\[ \sum_{t=1}^{T_0} \frac{\alpha_t^2}{A_t} \propto \sum_{t=1}^{T_0} \frac{n t^{2n}}{t^{n+1}} \propto n \sum_{t=1}^{T_0} t^{n-1}   \propto T_0^n \]
By diving every term by $A_T$, we conclude that
\begin{equation}
\begin{aligned}
\mathbb{E} [F(x_{T+1}^{ag})]- F_\star  \lesssim \frac{\mu  D^{H}(x_F,x_{1})}{n T^{n+1}} + \mu D^2 +  \frac{n\sigma^2}{\mu T }+   \frac{LD^2}{T} \Big( \frac{Ln}{\mu T^2}\Big)^{n/2}  
\end{aligned}
\end{equation}

\end{proof}

We would like to underline that transition time analysis can co-exist with the original analysis \cite{daspremont2022optimal}. We will illustrate the previous result by choosing $(n,\mu)$.

\begin{corollary}
Under the assumptions of Theorem~\ref{thm smooth},  if we tune $(n,\mu) = \Big( 2, \frac{1}{\sqrt{T}} \Big)$ (with $ 3 L^{2/3} \leq T $), then
\begin{equation}
\begin{aligned}
 \mathbb{E} [F(x_{T+1}^{ag})]- F_\star  = O  \left( \frac{  D^{H}(x_F,x_{1})}{T^{3.5}}  +  \frac{\sigma^2 + D^2}{\sqrt{T} }+   \frac{LD^2}{T} \Big( \frac{L}{ T^{3/2}}\Big)  \right)
\end{aligned}
\end{equation}
\end{corollary}

\begin{proof}
We apply the previous Theorem with $n=2$ and $\mu = \frac{1}{\sqrt{T}}$, we note that 
\begin{align*}
 2 \sqrt{L/\mu} \leq T \quad \Leftrightarrow 4 L \sqrt{T} \leq T^2 \Leftrightarrow 4 L \leq T^{3/2} 
\end{align*}
\end{proof}

\subsection{Composite Mirror descent for Relative smoothness}
For the relative smoothness setting, we recall for each random sampling $\xi$, $f_\xi$ is $L$ relatively smooth with respect to $H$ if for all $x,y\in \mathcal{X}, $ \[ D^{f_\xi}(x,y) \leq L_\star D^H(x,y) \]

The concept of randomness with relative smoothness is more subtle than the usual way since the standard variance could be arbitrary large in the relative smooth case. We will consider the new variance definition from \cite{Dragomir2021FastSB}, we suppose there exists $\eta_0 $ and $\sigma^2$ such that for all $x \in \mathcal{X}$ and $\eta < \eta_0$
\begin{equation}
\label{variance-realtive}
\mathbb{E}_\xi \left[ D^{H_\star}\Big( \nabla H(x) - 2\eta \nabla f_{\xi}(x_F), \nabla H(x)\Big) \right] \leq 2\eta^2 \sigma^2. 
\end{equation}
For our composite mirror descent, we will also suppose that for all $\eta > 0$
\begin{equation}
\label{bounded-realtive}
\mathbb{E}_\xi \left[ D^{H_\star}\Big( \nabla H(x) - 2\eta \nabla f_{\xi}(x_F), \nabla H(x)\Big) \right] \leq 2 \eta^2 G^2
\end{equation}
The constant $G$ could be much larger than $\sigma$, but we will see that its impact is limited in our convergence result.

\begin{algorithm}[H]
\caption{Composite Mirror descent for Relative smoothness }
\label{Algo Relative smoothness}
\begin{algorithmic}
\Require Number of iterations $T \geq 0$, starting point $x_1 \in \mathcal{X}$, step-sizes $(\alpha_t,\beta_t, \gamma_t)_{t\geq 1}$, regularizer $H$ 
\For{$ 1 \leq t \leq T$} 
\begin{equation}
 x_{t+1}  = \arg\min_{x\in \mathcal{X}} \{ \alpha_t  \langle \nabla f_\xi(x_t) , x \rangle + \beta_t H(x)  + \gamma_t D^{H}(x,x_{t}) \}
\end{equation}
\EndFor
\State Output $x_{t}^{ag}:=  \frac{\sum_{t=1}^{T}  \alpha_{t} x_{t}}{\sum_{t=1}^{T} \alpha_{t}}$. 
\end{algorithmic}
\end{algorithm}

\begin{theorem} 
We recall that for the bounded relative scale case, we assume that there exists $\eta_0, \sigma, G, L_\star >0$ such that for all $x,y \in \mathcal{X}$,
\begin{align*}
\forall \xi, \quad D^{f_\xi}(x,y) & \leq L_\star D^H(x,y) \\
\forall \eta< \eta_0, \quad \mathbb{E}_\xi \left[ D^{H_\star}\Big( \nabla H(x) - 2\eta \nabla f_{\xi}(x_F), \nabla H(x)\Big) \right] & \leq 2\eta^2 \sigma^2 \\
\forall \eta, \quad \mathbb{E}_\xi \left[ D^{H_\star}\Big( \nabla H(x) - 2\eta \nabla f_{\xi}(x_F), \nabla H(x)\Big) \right] & \leq 2 \eta^2 G^2 \\
\exists \hat{x}_0 \in \mathcal{X}, 
\nabla H(\hat{x}_0) & = 0
\end{align*}
Suppose Algorithm \ref{Algo Power SMD} runs under the oblivious step-size (\ref{oblivious-stepsize}).For any $\mu > \max \Big( \frac{L_\star}{T}, \frac{\eta_0}{T} \Big)$ , 
\begin{equation}
\begin{aligned}
   \frac{\sum_{t=1}^{T} n t^n \mathbb{E}[ D^{F}(x_\star,x_t)]}{T^{n+1}}   = \mathcal{O}  \left(  \frac{\mu D^{H}(x_F,x_1)}{n T^{n+1}} + \mu D^2 + \frac{\sigma^2 n }{\mu T } + \frac{G^2}{n \mu T^{n+1}}  \max \Big\{ \frac{L^n}{\mu^n}, \frac{\eta_0^n}{\mu^n }   \Big\} \right) 
\end{aligned}
\end{equation}
\end{theorem}

\begin{proof}
Since we supposed that the mirror descent steps stay in the interior of the domain $\mathcal{X}$, we know that the gradients at iteration $t$ is zero at $x_{t+1}$
\begin{equation}
\label{eq-relative-smooth-step1}
\begin{aligned}
x_{t+1} = \arg\min_{x\in \mathcal{X}} \{ \alpha_t  \langle \nabla f_\xi(x_t) , x \rangle + \beta_t H(x)  + \gamma_t D^{H}(x,x_{t}) \} \\
\implies  \alpha_t   \nabla f_\xi(x_t) + \beta_t \nabla  H(x_{t+1})  + \gamma_t \Big(\nabla H (x_{t+1}) - \nabla H(x_t) \Big) = 0 \\
\Leftrightarrow \alpha_t  \nabla f_{\xi}(x_t)  =  \gamma_t \Big[ \nabla H(x_t) - \nabla H(x_{t+1}) \Big] - \beta_t \nabla H(x_{t+1}) \\
\Leftrightarrow (\beta_t + \gamma_t) \nabla H(x_{t+1})  =  \gamma_t  \nabla H(x_t) - \alpha_t  \nabla f_{\xi}(x_t) 
\end{aligned}
\end{equation}
Now, we apply the proximal lemma \ref{lem:prox-free} where $u=x$, $y=x_t$, $u^{\star}=x_{t+1}$, $f(\cdot) = \alpha_t \langle \nabla f_{\xi}(x_t), \cdot \rangle + \beta_t H(\cdot) $, and $\nu(\cdot)= \gamma_t H(\cdot)$. We obtain for all $x \in \mathcal{X}$
\begin{align*}
& \alpha_t  \langle \nabla f_{\xi}(x_t), x_{t+1} - x \rangle + \beta_t \Big(  H(x_{t+1})  - H(x)   \Big) + \gamma_t D^{H}( x_{t+1}, x_t) +  \beta_t D^{H}(x,x_{t+1}) \\
= &  \gamma_t [D^{H}(x, x_{t}) - D^{H}(x,x_{t+1})] 
\end{align*}
Considering $x= x_\star = x_F $ leads us to 
\begin{equation}
\begin{aligned}
& \alpha_t  \Big( \langle \nabla f_{\xi}(x_t), x_{t+1} - x_\star \rangle \Big) +   \gamma_t D^{H}( x_{t+1}, x_t)  \\
= &  \gamma_t D^{H}(x, x_{t}) -  (\beta_t + \gamma_t) D^{H}(x,x_{t+1}) + \beta_t \Big( H(x_\star) -  H(x_{t+1})   \Big)
\end{aligned}
\end{equation}
After arranging different terms, we have
\begin{equation}
\label{eq-relative-smooth-step2}
\begin{aligned}
& \alpha_t   \langle \nabla f_{\xi}(x_t) , x_{t} - x_\star \rangle  +  \alpha_t   \langle \nabla f_{\xi}(x_t) , x_{t+1} - x_t \rangle +  \gamma_t D^{H}( x_{t+1}, x_t)  \\
= &   \gamma_t D^{H}(x_\star, x_{t}) - (\beta_t + \gamma_t) D^{H}(x_\star,x_{t+1}) + \beta_t \Big( H(x_\star) -  H(x_{t+1})   \Big)
\end{aligned}
\end{equation}
We replace the term $\alpha_t  \nabla f_{\xi}(x_t)$ in \eqref{eq-relative-smooth-step2} with the one in \eqref{eq-relative-smooth-step1}. We obtain
\begin{align*}
& \alpha_t  \langle \nabla f_{\xi}(x_t) , x_{t} - x_\star \rangle  +   \gamma_t   \langle \nabla H(x_t) - \nabla H(x_{t+1}) , x_{t+1} - x_t \rangle  +  \gamma_t D^{H}( x_{t+1}, x_t)  \\
= &   \gamma_t D^{H}(x_\star, x_{t}) -  (\beta_t + \gamma_t) D^{H}(x_\star,x_{t+1})  \\
+ &    \beta_t \Big( H(x_\star) -  H(x_{t+1})   \Big) + \beta_t \langle \nabla H(x_{t+1}), x_{t+1} - x_t \rangle
\end{align*}
We introduce the following simplifications,
\begin{equation}
\begin{aligned}
\langle \nabla H(x_{t+1})  - \nabla H(x_t) , x_{t+1} - x_t \rangle & = D^H(x_t,x_{t+1}) + D^{H}(x_{t+1},x_t) \\
\langle \nabla H(x_{t+1}), x_{t+1} - x_t \rangle & = H(x_{t+1}) - H(x_t)   +  D^H(x_t,x_{t+1}) 
\end{aligned}
\end{equation}
Therefore, we obtain
\begin{equation}
\begin{aligned}
& \alpha_t  \langle \nabla f_{\xi}(x_t) , x_{t} - x_\star \rangle  -   \gamma_t  \Big( D^H(x_t,x_{t+1}) + D^{H}(x_{t+1},x_t)  \Big)   +  \gamma_t D^{H}( x_{t+1}, x_t)  \\
= &   \gamma_t D^{H}(x_\star, x_{t}) -  (\beta_t + \gamma_t) D^{H}(x_\star,x_{t+1}) +     \beta_t \Big( H(x_\star) -  H(x_{t})   \Big) + \beta_t D^H(x_t,x_{t+1}) 
\end{aligned}
\end{equation}
which can be rearranged into
\begin{equation}
\begin{aligned}
 \alpha_t  \langle \nabla f_{\xi}(x_t) , x_{t} - x_\star \rangle &=    \gamma_t D^{H}(x_\star, x_{t}) -  (\beta_t + \gamma_t) D^{H}(x_\star,x_{t+1})  \\
 & +     \beta_t \Big( H(x_\star) -  H(x_{t})   \Big) + (\beta_t + \gamma_t ) D^H(x_t,x_{t+1}) 
\end{aligned}
\end{equation}
Since $\nabla F(x_\star) = 0 $,  by considering the conditional expectation at iteration $t$, we obtain
\begin{align*}
& \alpha_t  \Big( D^{F}(x_t,x_\star) + D^{F}(x_\star,x_t)  \Big) \\
= &   \alpha_t   \langle \nabla F(x_t) - \nabla F(x_\star), x_{t} - x_\star \rangle   \\
= &   \mathbb{E}_{\xi} \Big[ \alpha_t   \langle \nabla f_{\xi}(x_t) , x_{t} - x_\star \rangle  \Big] \\
= &  \gamma_t D^{H}(x_\star, x_{t}) -  (\beta_t + \gamma_t) \mathbb{E} D^{H}(x_\star,x_{t+1})     \\
+ &   \beta_t \Big( H(x_\star) -  H(x_{t})   \Big)  +  (\beta_t + \gamma_t) \mathbb{E}  D^H(x_t,x_{t+1})
\end{align*}
Our goal is to rewrite $\mathbb{E}  D^H(x_t,x_{t+1})$. We use the decomposition of $\nabla H(x_{t+1})$ in \eqref{eq-relative-smooth-step1}
\begin{align*}
 D^H(x_t, x_{t+1})  & = D^{H_\star}(\nabla H(x_{t+1}), \nabla H(x_t) ) \\
& = D^{H_\star}\Big(\frac{\gamma_t}{\beta_t + \gamma_t} \nabla H(x_t) -  \frac{\alpha_t}{\beta_t + \gamma_t} \nabla f_{\xi}(x_t) ,\nabla H(x_t) \Big)  \\
& = D^{H_\star}\left(\frac{\gamma_t}{\beta_t + \gamma_t} \Big( \nabla H(x_t) - \frac{\alpha_t}{\gamma_t} \nabla f_{\xi}(x_t) \Big) + \frac{\beta_t}{\beta_t + \gamma_t} \nabla H(\hat{x}_0), \nabla H(x_t) \right)  
\end{align*}
where $\hat{x}_0 \in \mathcal{X}$ satisfies $\nabla H(\hat{x}_0)=0$. Since $D^{H_\star}(\cdot,\nabla H(x_t))$ is a convex function,
\begin{align*}
 D^H(x_t, x_{t+1})   & \leq \frac{\gamma_t}{\beta_t+ \gamma_t}  D^{H_\star}\Big(\nabla H(x_t) -  \frac{\alpha_t}{\gamma_t} \nabla f_{\xi}(x_t) ,\nabla H(x_t) \Big) \\
& + \frac{\beta_t}{\beta_t + \gamma_t} D^{H_\star}(\nabla H(\hat{x}_0),\nabla H(x_t))
\end{align*}
We also notice that
\begin{align*}
D^{H_\star}(\nabla H(\hat{x}_0),\nabla H(x_t)) = D^H(x_t,\hat{x}_0) & = H(x_t) - H(\hat{x}_0) - \langle \nabla H(\hat{x}_0),x_t\rangle \\
& = H(x_t) - H(\hat{x}_0)
\end{align*}
After simplification, we obtain
\begin{align*}
& \alpha_t  \Big( D^{F}(x_t,x_\star) + D^{F}(x_\star,x_t)  \Big) \\
\leq & \gamma_t D^{H}(x_\star, x_{t}) -  (\beta_t + \gamma_t) \mathbb{E} D^{H}(x_\star,x_{t+1}) +  \beta_t \Big( H(x_\star) -  H(\hat{x}_0)   \Big)    \\
+ &  \gamma_t \mathbb{E}   D^{H_\star}\Big(\nabla H(x_t) -  \frac{\alpha_t}{\gamma_t} \nabla f_{\xi}(x_t) ,\nabla H(x_t) \Big)
\end{align*}
The last term corresponds to the variance that we are trying to bound. If $ \frac{\alpha_t}{\gamma_t} \geq \min \Big( \frac{1}{2L_\star}, \frac{1}{2\eta_0}\Big)$, we simply use the assumption the quadratic upper bound $G$ of the stochastic noise
\begin{align*}
D^{H_\star}\Big(\nabla H(x_t) -  \frac{\alpha_t}{\gamma_t} \nabla f_{\xi}(x_t) ,\nabla H(x_t) \Big) \leq \frac{\alpha_t^2 G^2}{\gamma_t^2 }.  
\end{align*}
In the other case, we decompose in the following way, 
\begin{align*}
& D^{H_\star}\Big(\nabla H(x_t) -  \frac{\alpha_t}{\gamma_t} \nabla f_{\xi}(x_t) ,\nabla H(x_t) \Big)\\
\leq & \frac{1}{2}   D^{H_\star}\Big(\nabla H(x_t) -  \frac{2\alpha_t}{\gamma_t} \nabla f_{\xi_t}(x_\star) ,\nabla H(x_t) \Big) \\
+ & \frac{1}{2}   D^{H_\star}\Big(\nabla H(x_t) -  \frac{2 \alpha_t}{\gamma_t} \Big( \nabla f_{\xi}(x_t) - \nabla f_{\xi}(x_\star)\Big) ,\nabla H(x_t) \Big)
\end{align*}
The first term is related to the variance assumption, if $\frac{2\alpha_t}{ \gamma_t} \leq \frac{1}{\eta_0}$,
\begin{align*}
\frac{ \gamma_t}{2} \mathbb{E}  D^{H_\star}\Big(\nabla H(x_t) -  \frac{2\alpha_t}{\gamma_t} \nabla f_{\xi}(x_\star) ,\nabla H(x_t) \Big)  \leq \frac{ \gamma_t}{2} \times \Big( \frac{2\alpha_t}{ \gamma_t} \Big)^2 \sigma^2  = \frac{2\sigma^2 \alpha_t^2  }{ \gamma_t }
\end{align*}
The second term is related to the relative smoothness and the cocoercivity of $f_\xi$. By Lemma \ref{lem:cocoercivity}, if $\frac{2\alpha_t}{ \gamma_t} \leq \frac{1}{L_\star},$
\begin{align*}
\frac{\gamma_t}{2} \mathbb{E}    D^{H_\star}\Big(\nabla H(x_t) -  \frac{2 \alpha_t}{\gamma_t} \Big( \nabla f_{\xi}(x_t) - \nabla f_{\xi}(x_\star) \Big)  ,\nabla H(x_t) \Big) & \leq \frac{ \gamma_t}{2} \times \frac{2\alpha_t}{ \gamma_t} \mathbb{E} D^{f_{\xi}}(x_t,x_\star) \\
& = \alpha_t D^{F}(x_t,x_\star)
\end{align*}
Combining everything, we know 
\begin{equation}
\begin{aligned}
 \alpha_t   D^{F}(x_\star,x_t)  & \leq  \gamma_t D^{H}(x, x_{t}) -  (\beta_t + \gamma_t) \mathbb{E} D^{H}(x,x_{t+1}) +  \beta_t \Big( H(x_\star) -  H(\hat{x}_0)   \Big) \\
 &    + \begin{cases}
   \frac{ 2\sigma^2 \alpha_t^2  }{ \gamma_t }  \quad \text{if $ \frac{\alpha_t}{\gamma_t} \leq \min \Big( \frac{1}{2L_\star}, \frac{1}{2\eta_0}\Big)$} \\
    \frac{ G^2 \alpha_t^2  }{ \gamma_t } \quad \text{else}
    \end{cases}   
\end{aligned}
\end{equation}

Now we note $T_0 = \arg\max_{t} \Big\{ \frac{2 \alpha_t}{ \gamma_t} \geq \min\{\frac{1}{L}, \frac{1}{\eta_0}\} \Big\}.$ By the telescopic sum 
\begin{align*}
 \sum_{t=1}^{T} \alpha_t  D^{F}(x_\star,x_t) & =     \gamma_1 D^{H}(x_\star, x_{t})  +  \sum_t \beta_t \Big( H(x_\star) -  H(\hat{x}_{0})   \Big)     + \sum_{t=T_0}^{T} \frac{2\sigma^2 \alpha_t^2  }{ \gamma_t } + \sum_{t=1}^{T_0}  \frac{G^2\alpha_t^2 }{\gamma_t }  
\end{align*}
We finish the proof by considering the polynomial step-size $\alpha_t \propto t^n, \beta_t \propto \mu t^n, \gamma_{t} \propto \mu t^{n+1}/n$ with $n \geq 0$. We notice that
\begin{align*}
 \sum_{t=1}^{T_0}  \frac{\alpha_t^2 G^2}{\gamma_t } = \frac{G^2}{\mu} \Big( \sum_{t=1}^{T_0} t^{n-1} \Big) = \mathcal{O} \Big( \frac{G^2 T_0^n}{n \mu}   \Big) = \mathcal{O} \left( \frac{G^2}{n \mu}  \max \Big\{ \frac{L^n}{\mu^n}, \frac{\eta_0^n}{\mu^n }   \Big\} \right).
\end{align*}
\end{proof}
To illustrate the previous theorem and to compare the current result to other existing results, we assume convexity and choose concrete values for $(n,\mu)$.

\begin{corollary}
Under previous assumption, if $D^F(x_\star,\cdot)$ is convex and  $T\geq \max \Big( L_\star^2 , \eta_0^2 \Big) $, by tuning $(n,\mu) = \Big(3,\frac{1}{\sqrt{T}} \Big)$, our algorithm yields 
\begin{equation}
\begin{aligned}
   \mathbb{E}[ D^{F}(x_\star,x_{T+1}^{ag})]  = O  \left(  \frac{ D^{H}(x_F,x_1)}{T^{3.5}} + \frac{\sigma^2 + D^2}{\sqrt{T}}+ \frac{G^2}{T^{3}} \right) 
\end{aligned}
\end{equation}
\end{corollary}

\begin{proof}
We apply the previous Theorem with $n=3$ and $\mu = \frac{1}{\sqrt{T}}$. We apply Jensen's inequality and note that 
\begin{align*}
 \mu \geq \max \Big( \frac{L_\star}{T}, \frac{\eta_0}{T} \Big) \quad \Leftrightarrow T \geq \max \Big( L_\star^2 , \eta_0^2 \Big)
\end{align*}
\end{proof}

\section{Grid search}
\label{section-parallelization}
When the function value $F$ can be evaluated, Nemirovski's grid method offers a parameter-independent algorithmic approach. The method operates by running $K$ concurrent sessions over $N$ iterations, with each session using a distinct value of the regularization parameter $\mu$. The algorithm then selects the optimal solution from among these $K$ candidate outputs.

The standard implementation sets the number of parallel sessions logarithmically in relation to the total iterations: $K := O (\log(T))$. However, when rough estimates of the problem parameters are available, a more informed choice of $K$ may yield better performance. The algorithm presented below accommodates an arbitrary selection of $K$ to allow for such flexibility.

\begin{algorithm}[H]
\caption{Grid search Algorithm}
\label{Grid search Algorithm}
\begin{algorithmic}
\Require Total iteration number $T \geq 0$, number of parallel session $K \geq 1$, Starting point $x_1 \in \mathcal{X}$, algorithm $\mathcal{A}$
\State Find $N$ such that $T/2 \leq  K N  < T $
\State Consider initial start point: $x_{1}^{0}= x_1$
\For{$ k \in \left[ - \lfloor K/2 \rfloor + 1, \lfloor K/2 \rfloor - 1  \right]$}
\State Run Algorithm $\mathcal{A}$ with $N$ iterations and regularizing coefficient $\mu= 2^{k} $, obtain $x_{N+1}^{k}$ as output
\EndFor
\State Compute the index $k_{out} = \arg \max_{k} F(x_{N+1}^{k})$
\State Return $x_{out} = x_{N+1}^{k_{out}} $
\end{algorithmic}
\end{algorithm}

The general idea of the parallelization algorithm is to use an exponential range of regularizer values $\mu$ to find the optimal one. Before giving the full version, we present first a simplified version of the Grid search lemma.
\begin{lemma}
\label{lemma parallization simplified}
Consider an algorithm $\mathcal{A}$ that take as inputs the number of iterations $N$ and a regularization parameter $\mu > 0$, we denote by $x_{N}^{\mu}$ the output of the algorithm. We assume that the algorithm $\mathcal{A}$ satisfies for all $ N \geq 1, \mu >0 $,
\[ \mathbb{E}[F(x_{N}^{\mu})] - F_\star \leq \mu A + \frac{B}{\mu N} \]
with $A,B \geq 0$ being positive constants. For simplicity, we suppose that $\log(T)$ is an integer. If we know that
\begin{equation}
\label{eq-grid-T-big-1}
 T  \geq   \max \left\{\Big( \frac{A}{B}\Big)^{1/3} ,\Big( \frac{B}{A}   \Big)^{1/4} \right\}
\end{equation}
then running the Algorithm \ref{Grid search Algorithm} with $K = 4 \log(T)  $ will ensure that within $T$ iterations, the output $x_{out}$ satisfies
\begin{align*}
 \mathbb{E}[F(x_{out})] - F_\star =  O  \left( \sqrt\frac{AB \log(T)}{T} \right).
\end{align*}
\end{lemma}
\begin{proof}
Intuitively, the optimal value of $\mu$ is $\sqrt{\frac{B}{AN}}$. Therefore, our goal is to show there exists $k \in [- \log(T),\log(T)]$ (logarithm base $2$) such that
\begin{equation}
\label{eq-grid-search-good-mu1}
\frac{1}{2} \sqrt{\frac{B}{AN}}  \leq \mu = 2^{k} \leq  \sqrt{\frac{B}{AN}}.
\end{equation}
To show the existence of such a $k$, we need to check the two extreme values $\mu = 2^{2\log(T)}$ and $\mu = 2^{-2\log(T)}$ (recall that $T \propto KN =  4N \log(T)$)
\begin{align*}
& \text{Case } \mu = 2^{2\log T} \text{:}  \quad \mu \geq \frac{1}{2} \sqrt{\frac{B}{A N} }\Leftrightarrow T^2  \geq \frac{1}{2} \sqrt{\frac{B}{A N} } \Leftarrow T^5  \geq   \log(T) \Big( \frac{B}{A } \Big)  \Leftarrow T^4   \geq \frac{B}{A }   \\
& \text{Case } \mu = 2^{-2\log T} \text{:}  \quad \mu \leq  \sqrt{\frac{B}{A N} }\Leftrightarrow \frac{1}{T^2}  \leq \sqrt{\frac{B}{A N} } \Leftarrow  \frac{1}{T^4}  \leq \frac{B}{A N} \Leftarrow   \frac{A}{B}  \leq T^3
\end{align*}
And we notice that both conditions are satisfied when $T$ is large enough in \eqref{eq-grid-T-big-1}. Therefore, there exists $k$ verifying \eqref{eq-grid-search-good-mu1}.  ($\mu=2^k \approx \sqrt{\frac{B}{AN}}$) If we consider the output of the $k$-th run
\begin{align*}
 \mathbb{E} [F(x_N^k)] - F_\star \leq  \mu A + \frac{B}{\mu N }  & =  2^k A + \frac{B}{2^k N } \\
& \leq  \sqrt{\frac{AB}{N}} + 2 \sqrt{\frac{AB}{N}} = 3 \sqrt{\frac{AB}{N}},
\end{align*}
and we conclude by recalling $T = 4N \log(T)$ and $F(x_{out}) \leq F(x_N^k)$,
\begin{align*}
\mathbb{E} [F(x_{out})] - F_\star  \leq 3\sqrt{\frac{4AB \log(T)}{T}} =   6\sqrt{\frac{AB \log(T)}{T}}.
\end{align*}
\end{proof}
It is interesting to note that the condition \eqref{eq-grid-T-big-1} scales differently than the initial hypothesis yielding $\mu \propto \frac{1}{\sqrt{T}}$  \[  T  \geq   \max \left\{ A , B \right\}, \]
therefore the Grid search Algorithm could be very powerful in practice when we have a reliable way of comparing different values of $F$. Now, we present the Grid search Lemma where multiple terms could be involved.
\begin{lemma}
\label{lemma parallization}
Consider an algorithm $\mathcal{A}$ that take as inputs the number of iterations $N$ and a regularization parameter $\mu > 0$, we denote by $x_{N}^{\mu}$ the output of the algorithm. We assume that the algorithm $\mathcal{A}$ satisfies for all $ N \geq 1, \mu >0 $,
\[ \mathbb{E}[F(x_{N}^{\mu})] - F_\star \leq \mu A + \frac{B}{\mu^{\beta_1} N^{\alpha_1}} + \frac{C}{\mu^{\beta_2} N^{\alpha_2}} \]
with $A,B,C, \alpha_1,\alpha_2,\beta_1,\beta_2 \geq 0$ being positive constants. If the following condition is satisfied
\begin{equation}
\label{eq-grid-T-big-2}
\begin{aligned}
K + \frac{2\alpha_1 }{1+\beta_1} \log N \geq \frac{2}{1+\beta_1}  \log \Big(  \frac{2B }{A} \Big) + 2 \\
K + \frac{2\alpha_2 }{1+\beta_2} \log N \geq \frac{2}{1+\beta_2}  \log \Big(  \frac{2C }{A} \Big) + 2 \\
K  \geq 2 \min \left( \frac{1}{1+\beta_1} \log \Big(\frac{2AN^{\alpha_1}}{B}\Big),  \frac{1}{1+\beta_2} \log \Big(\frac{2AN^{\alpha_2}}{C}   \Big)  \right)
\end{aligned}
\end{equation}
then running Algorithm \ref{Grid search Algorithm} will ensure that within $T$ iterations, the output $x_{out}$ satisfies
\begin{align*}
 \mathbb{E}[F(x_{out})] - F_\star =  O  \left( \frac{ A^{\frac{\beta_1}{1+\beta_1}} B^{\frac{1}{1+\beta_1}} K^{\frac{\alpha_1}{1+\beta_1}}}{T^{\frac{\alpha_1}{1+ \beta_1}}}    +  \frac{A^{\frac{\beta_2}{1+\beta_2}} C^{\frac{1}{1+\beta_2}}K^{\frac{\alpha_2}{1+\beta_2}}}{  T^{\frac{\alpha_2}{1+ \beta_2}}} \right)
\end{align*}
\end{lemma}
\begin{proof}
The main difference with the previous Lemma \ref{lemma parallization simplified} is that multiple terms involving $\mu$ are in the equation and therefore there are multiple critical values of $\mu$ depending on $A,B,C,N$. 

In condition \eqref{eq-grid-T-big-2}, the first two lines mean that when $\mu$ is at its maximum value, $\mu A$ is the biggest term. In fact,
\begin{align*}
 \text{Case } \mu = 2^{\lfloor K/2 \rfloor - 1} \text{:}  \quad \mu A \geq\frac{B}{\mu^{\beta_1} N^{\alpha_1}} & \Leftrightarrow  
 \mu \geq  \frac{  B^{\frac{1}{1+\beta_1}} }{A^{\frac{1}{1+\beta_1}}N^{\frac{\alpha_1}{1+ \beta_1}}} \\
 & \Leftrightarrow 2^{\lfloor K/2 \rfloor - 1}  \geq  \frac{  B^{\frac{2}{1+\beta_1}} }{A^{\frac{1}{1+\beta_1}}N^{\frac{\alpha_1}{1+ \beta_1}}}  \\
 & \Leftrightarrow \lfloor K/2 \rfloor \geq \frac{1}{1+\beta_1}  \log \Big(  \frac{  2B }{A N^{\alpha_1}} \Big) \\
& \Leftarrow K \geq \frac{2}{1+\beta_1}  \log \Big(  \frac{  2B }{A N^{\alpha_1}} \Big) + 2   \\
& \Leftrightarrow K + \frac{2\alpha_1 }{1+\beta_1} \log N \geq \frac{2}{1+\beta_1}  \log \Big(  \frac{2B }{A} \Big) + 2   \\
\mu A \geq\frac{C}{\mu^{\beta_2} N^{\alpha_2}} & \Leftarrow  
K + \frac{2\alpha_2 }{1+\beta_2} \log N \geq \frac{2}{1+\beta_2}  \log \Big(  \frac{2C }{A} \Big) + 2 
\end{align*}
For the smallest value of $\mu$, we require one of the two following conditions to be satisfied (we recall that $T/2 \leq K N \leq T $):
\begin{align*}
 \text{Case } \mu = 2^{- \lfloor K/2 \rfloor + 1} \text{:}  \quad \mu A \leq\frac{B}{\mu^{\beta_1} N^{\alpha_1}}  & \Leftrightarrow 2^{- \lfloor K/2 \rfloor + 1}  \leq \frac{  B^{\frac{1}{1+\beta_1}} }{A^{\frac{1}{1+\beta_1}}N^{\frac{\alpha_1}{1+ \beta_1}}} \\
 & \Leftarrow \frac{1}{1+\beta_1}  \log \Big(  \frac{2A N^{\alpha_1}}{B} \Big) \leq K/2  \\
 \mu A \leq\frac{C}{\mu^{\beta_2} N^{\alpha_2}}  & \Leftarrow \frac{1}{1+\beta_2}  \log \Big(  \frac{2A N^{\alpha_2}}{C} \Big) \leq K/2  \\
\end{align*}
Since the dominant terms are different for the biggest and smallest values of $k$, we can consider the largest value of $k$ where $\mu A$ is not dominating the other two terms. In other words, we consider the largest $k$ such that one of the two following conditions is satisfied
\begin{equation}
\label{eq-lemma-parallelization-condition}
\begin{aligned}
\frac{1}{2} \Big( \frac{B}{AN^{\alpha_1}}\Big)^{\frac{1}{1+\beta_1}} \leq \mu= 2^{k} \leq \Big( \frac{B}{AN^{\alpha_1}}\Big)^{\frac{1}{1+\beta_1}} \\
\frac{1}{2} \Big( \frac{C}{AN^{\alpha_2}}\Big)^{\frac{1}{1+\beta_2}} \leq \mu= 2^{k} \leq \Big( \frac{C}{AN^{\alpha_2}}\Big)^{\frac{1}{1+\beta_2}} 
\end{aligned}
\end{equation}
Since it is the largest term not dominant, we know by replacing $\mu = 2^k$ by $2^{k+1}$, $\mu A$ will become dominant. Therefore, the other two terms with $B$ and $C$ cannot be much larger than $\mu A$. In other words, if one of the two previous conditions is satisfied with $\mu = 2^k$, we know
\begin{align*}
2^{k+1}  \geq  \frac{  B^{\frac{1}{1+\beta_1}} }{A^{\frac{1}{1+\beta_1}}N^{\frac{\alpha_1}{1+ \beta_1}}} \implies 2^{1+\beta_1} \mu^{1+\beta_1} \geq \frac{B}{AN^{\alpha_1}} \implies 2^{1+\beta_1}  \mu A \geq \frac{B}{\mu^{\beta_1} N^{\alpha_1}} \\
2^{k+1}  \geq  \frac{  C^{\frac{1}{1+\beta_2}} }{A^{\frac{1}{1+\beta_2}}N^{\frac{\alpha_2}{1+ \beta_2}}} \implies 2^{1+\beta_2} \mu^{1+\beta_2} \geq \frac{ C}{AN^{\alpha_2}} \implies 2^{1+\beta_2}  \mu A \geq \frac{C}{\mu^{\beta_2} N^{\alpha_2}}
\end{align*}
Therefore, we can consider the output of the $k$-th run
\begin{align*}
\mathbb{E} [F(x_N^k)] - F_\star  \leq & \mu A + \frac{B}{\mu^{ \beta_1} N^{\alpha_1}} +  \frac{C}{\mu^{\beta_2}N^{\alpha_2}}  \\
\leq & \mu A \left( 1 + 2^{1+\beta_1} + 2^{1+ \beta_2}\right)  
\end{align*}
Now if the first line of \eqref{eq-lemma-parallelization-condition} is satisfied, we have 
\begin{align*}
\mathbb{E} [F(x_N^k)] - F_\star & \leq    \left( 1 + 2^{1+\beta_1} + 2^{1+ \beta_2}\right)   A^{\frac{\beta_1}{1+\beta_1}} B^{\frac{1}{1+\beta_1}}   N^{-\frac{\alpha_1}{1+ \beta_1}}  \\
& \leq \left( 1 + 2^{1+\beta_1} + 2^{1+ \beta_2}\right)   A^{\frac{\beta_1}{1+\beta_1}} B^{\frac{1}{1+\beta_1}} K^{\frac{\alpha_1}{1+ \beta_1}}  T^{-\frac{\alpha_1}{1+ \beta_1}}
\end{align*}
If the second line of \eqref{eq-lemma-parallelization-condition} is satisfied, we have
\begin{align*}
\mathbb{E} [F(x_N^k)] - F_\star \leq \left( 1 + 2^{1+\beta_1} + 2^{1+ \beta_2}\right) A^{\frac{\beta_2}{1+\beta_2}} C^{\frac{1}{1+\beta_2}} K^{\frac{\alpha_2}{1+ \beta_2}}   T^{-\frac{\alpha_2}{1+ \beta_2}}  
\end{align*}
We conclude by merging the conditions, replacing the maximum of the two cases by their sum.
\end{proof}
We note that the previous Lemma could work if we have more than three terms in the equation, the reasoning would be similar by distinguishing different cases. We also note that grid search could work even if the original algorithm $\mathcal{A}$ does not have a guarantee for all $\mu >0$. The only important values of $\mu$ are the critical ones that we use in the previous proof.

For the rest of section, we will assume that is possible to evaluate the population function $F$ in an efficient way and we will run grid search $K= 4\log(T)$, and show the derived convergence rates. 

\textbf{Optimization in relative scale}
We recall the initial result from \ref{thm relative power smd},
\begin{equation}
\begin{aligned}
 \mathbb{E}  \left[ F(x_{T}^{ag})\right]  - F_\star   = O  \left( \mu  \Big(\frac{ F_\star}{\Gamma}  + \frac{H(\hat{x}_0) - H_\star }{T} \Big )   + \frac{\mathcal{M}n F_\star }{\mu T}   + \mathcal{L}\Big( \frac{\mathcal{M}n}{\mu T}\Big)^{n+1} + \frac{\mu D^{H}(x_F,x_{1}) }{T^{n+1}}  \right) 
\end{aligned}
\end{equation}
For the sake of simplicity, we would like to only choose the optimal value of $\mu$ to balance the coefficients in front of $F_\star$ ($\frac{\mu}{\Gamma} \approx \frac{\mathcal{M}n}{\mu T}$). The final result might not be optimal for the other terms, but general conditions are easier to satisfy. We can balance the value of $\mu $ around $\mu =\sqrt{\frac{\mathcal{M} n \Gamma}{T}}$, and we note that this value of $\mu$ satisfies $ 2\mathcal{M} n /T \leq \mu \leq \frac{\Gamma}{2} $. The new corollary becomes
\begin{corollary}
Assume that $H$ is $1$-strongly-convex with regard to $\| \cdot \|$ and there exist $\Gamma, \mathcal{M}, \mathcal{L}>0$ such that for all $x \in \mathcal{X}$,
\begin{align*}
\Gamma (H(x) - \min H) & \leq F(x) \\
\mathbb{E} \Big[ \| \nabla f_{\xi} (x, \xi)\|_{\ast}^2 \Big] & \leq  \mathcal{M}  F(x) \\
F(x) - F_\star & \leq \mathcal{L} 
\end{align*}
We assume $T$ is large enough,
\begin{align*}
T \geq \max \left\{ \Big(\frac{1}{\Gamma \mathcal{M} n } \Big)^{1/3}, \Big( \Gamma \mathcal{M} n \Big)^{1/4} \right\}
\end{align*}
Suppose Algorithm \ref{Algo Power SMD} runs under the oblivious step-size (\ref{oblivious-stepsize}) with the Grid search Algorithm \ref{Grid search Algorithm} and $K= 4 \lceil \log T \rceil$, then 
\begin{equation}
\begin{aligned}
 \mathbb{E}  \left[ F(x_{out})\right]  - F_\star   & = O  \left(  \log^{1/2}(T) \sqrt{\frac{\mathcal{M} n}{\Gamma T}} F_\star + \log^{3/2}(T) \sqrt{\mathcal{M} n \Gamma} \frac{H(\hat{x}_0) - H_\star }{T\sqrt{T}}  \right) \\
 & + O  \left(   \mathcal{L}\Big( \frac{\mathcal{M} n \Gamma\log(T)}{ T} \Big)^{ \frac{n+1}{2}} + \sqrt{\frac{\mathcal{M} n \Gamma D^{H}(x_F,x_{1}) }{ T}}  \Big(\frac{\log(T)}{T}\Big)^{n+1}  \right) 
\end{aligned}
\end{equation}
\end{corollary}
\begin{proof}
The proof strategy is similar to the one in Lemma \ref{lemma parallization simplified}. We are looking for $k$ to balance $\frac{\mu}{\Gamma}$ with $\frac{\mathcal{M}n}{\mu T}$ while neglecting other terms. More precisely, we look for $k$ such that
\begin{align*}
\frac{1}{2} \sqrt{\frac{\mathcal{M} n}{\Gamma T}} \leq \mu = 2^k \leq \sqrt{\frac{\mathcal{M} n}{\Gamma T}}
\end{align*}
and we choose this value of $\mu$ for other terms such as $\frac{\mu D^H(x_F,x_1)}{T^{n+1}} $.
\end{proof}

\textbf{Optimization with smooth function} We recall the result from \ref{thm smooth}, if $ 2 \sqrt{L/\mu} \leq T $, we know
\begin{align*}
 \mathbb{E} [F(x_{T+1}^{ag})]- F_\star  =O \left(  \mu D^2 +  \frac{n \sigma^2}{\mu T }+   \frac{LD^2}{T} \Big( \frac{Ln}{\mu T^2}\Big)^{n/2} + \frac{\mu  D^{H}(x_F,x_{1})}{nT^{n+1}} \right)  
\end{align*}
In the smooth case, there are two important values of $\mu$ to consider. To optimize the choice of $\mu$, we need to balance the values of $\mu$ around $\mu = \frac{\sigma}{D}\sqrt{\frac{n}{ T}}$ ($\mu D \approx \frac{n\sigma^2}{\mu T}$) and $\mu = \frac{L n^{\frac{n}{n+2}}}{T^{\frac{2(n+1)}{n+2}}}$ ($\mu \approx \frac{L}{T} ( \frac{Ln}{\mu T^2})^{n/2}$).
\begin{theorem} 
Assume $H$ is $1$-strongly-convex with regard to $\| \cdot \|$ and that there exists $D, D_H, L, \sigma >0$ such that for all $x,y \in \mathcal{X}$,
\begin{align*}
D^{F}(x,y) & \leq \frac{L}{2} \|x-y\|^2 \\
\mathbb{E}_{\xi} \Big[ \| \nabla f_{\xi}(x) - \nabla F(x) \|_{\ast}^2 \Big] & \leq \sigma^2 \\
\| x- y\|^2 & \leq D^2 \\
H(x_F) - H(x) & \leq D^2
\end{align*}
where $x_F$ is the minimum of $F$. We assume $T$ is large enough,
\begin{align*}
T & \geq \max \left \{ L^{\frac{2+n}{6+4n}} n^\frac{n}{6+4n}, \frac{n^{1/5} \sigma^{2/5}}{D^\frac{2}{5}} \right\} \\
T & \geq \max \left \{\frac{1}{L^{1+n/2} n^{n/2}}, \frac{D^2}{n \sigma^2}\right\} 
\end{align*}
Suppose Algorithm \ref{Algo ACSMD} runs under the oblivious step-size (\ref{oblivious-stepsize}) with the Grid search Algorithm \ref{Grid search Algorithm} and $K= 4 \lceil \log T \rceil $, then 
\begin{equation}
\begin{aligned}
 \mathbb{E}  \left[ F(x_{out})\right]  - F_\star & =  \mathcal{O} \left( \frac{LD^2 n^{\frac{n}{n+2}}\log^2(T)}{T^{\frac{2(n+1)}{n+2}}} + \frac{\sigma D \sqrt{n \log(T)}}{\sqrt{T}}  \right) \\
  & +  \mathcal{O} \left( \max \left\{ \frac{L \log^2(T)}{T^\frac{2(n+1)}{n+2}}, \frac{\sigma \log^{1/2}(T)}{D \sqrt{nT}} \right\}  \frac{ \log^{n+1}(T) D^{H}(x_F,x_{1})}{T^{n+1}} \right)
\end{aligned}
\end{equation}
\end{theorem}
\begin{proof}
The proof strategy is similar to the one in Lemma \ref{lemma parallization}. When $\mu = 2^{2\lceil \log T \rceil} \geq T^2$ , we verify that
\begin{align*}
\mu D^2 \geq  \frac{LD^2}{T} \Big( \frac{Ln}{\mu T^2}\Big)^{n/2} & \Leftrightarrow \mu^{2+n} T^{2+2n} \geq L^{2+n} n^{n} \Leftarrow T^{6+4n} \geq L^{2+n} n^{n} \\
\mu D^2 \geq \frac{n \sigma^2}{\mu T } & \Leftrightarrow \mu^2 T \geq \frac{n \sigma^2}{D^2} \Leftarrow T^5 \geq \frac{n \sigma^2}{D^2}
\end{align*}
When $\mu = 2^{-2\lceil \log T \rceil} \leq T^{-2}$ , we verify that
\begin{align*}
\mu D^2 \leq  \frac{LD^2}{T} \Big( \frac{Ln}{\mu T^2}\Big)^{n/2} & \Leftrightarrow \mu^{2+n} T^{2+2n} \leq L^{2+n} n^{n} \Leftarrow T^{2} \geq \frac{1}{L^{2+n} n^{n}} \\
\mu D^2 \leq \frac{n \sigma^2}{\mu T } & \Leftrightarrow \mu^2 T \leq \frac{n \sigma^2}{D^2} \Leftarrow T \geq \frac{D^2}{n \sigma^2}
\end{align*}
Since we supposed that $T$ is large enough to verify previous condition, now we choose the largest $k$ such that one of the two following conditions is verified
\begin{align*}
\frac{1}{2} \frac{L n^{\frac{n}{n+2}}}{T^{\frac{2(n+1)}{n+2}}} \leq \mu = 2^k \leq  \frac{L n^{\frac{n}{n+2}}}{T^{\frac{2(n+1)}{n+2}}} \\
\frac{1}{2} \frac{\sigma}{D}\sqrt{\frac{n}{ T}}  \leq \mu = 2^k \leq   \frac{\sigma}{D}\sqrt{\frac{n}{ T}} 
\end{align*}
If the first line is verified, since $\mu \geq \frac{4L}{T^2}$, we can apply the result from Theorem \ref{thm smooth},
\begin{align*}
\mathbb{E} [F(x_N^k)] - F_\star = \mathcal{O} \left( \frac{LD^2 n^{\frac{n}{n+2}}}{T^{\frac{2(n+1)}{n+2}}} + \frac{L D^{H}(x_F,x_{1})}{T^{n+1+ \frac{2(n+1)}{n+2}}} \right)
\end{align*}
If the second line is verified, we can deduce that :
\begin{align*}
\mu \geq \frac{1}{2} \frac{\sigma}{D}\sqrt{\frac{n}{ T}}  \geq \frac{L n^{\frac{n}{n+2}}}{T^{\frac{2(n+1)}{n+2}}} \geq \frac{4L}{T^2} 
\end{align*}
Therefore,
\begin{align*}
\mathbb{E} [F(x_N^k)] - F_\star = \mathcal{O} \left( \frac{\sigma D \sqrt{n}}{\sqrt{T}} + \frac{\sigma D^{H}(x_F,x_{1})}{D \sqrt{n} T^{n+3/2}} \right)
\end{align*}
We conclude the proof by taking the two bounds and add the poly-logarithmic terms. 
\end{proof}

\textbf{Optimization with relative smooth functions}
We recall the initial result from \ref{thm relative power smd}, if $\mu \geq \max \left\{ \frac{L_\star}{T}, \frac{\eta_0}{T} \right \}$
\begin{equation}
\begin{aligned}
   \frac{\sum_{t=1}^{T} n t^n \mathbb{E}[ D^{F}(x_\star,x_t)]}{T^{n+1}}   = \mathcal{O} \left(  \frac{\mu D^{H}(x_F,x_1)}{n T^{n+1}} + \mu D^2 + \frac{\sigma^2 n }{\mu T } + \frac{G^2}{n \mu T^{n+1}}  \max \Big\{ \frac{L^n}{\mu^n}, \frac{\eta_0^n}{\mu^n }   \Big\} \right) 
\end{aligned}
\end{equation}
For the sake of simplicity, we balance the value of $\mu =\frac{\sigma}{D}\sqrt{\frac{ n }{T}} $ only to equalize two terms $\mu D^2 \approx \frac{\sigma^2 n}{\mu T}$. This choice of $\mu$ is not optimal for other terms, but the conditions are easier to be satisfied. The new corollary becomes
\begin{corollary}
Assume that $H$ is strictly-convex, $D^F(x_\star,\cdot)$ is convex and there exist $\Gamma, \mathcal{M}, \mathcal{L}>0$ such that for all $x \in \mathcal{X}$,
\begin{align*}
D^F(x,y) & \leq L_\star D^H(x,y) \\
\forall \eta< \eta_0, \quad \mathbb{E}_\xi \left[ D^{H_\star}\Big( \nabla H(x) - 2\eta \nabla f_{\xi}(x_F), \nabla H(x)\Big) \right] & \leq 2\eta^2 \sigma^2 \\
\forall \eta, \quad \mathbb{E}_\xi \left[ D^{H_\star}\Big( \nabla H(x) - 2\eta \nabla f_{\xi}(x_F), \nabla H(x)\Big) \right] & \leq 2 \eta^2 G^2 \\
H(x_F) - \min H & \leq D^2 
\end{align*}
We assume $T$ is large enough,
\begin{align*}
T \geq \max \left\{ \Big(\frac{D^2}{\sigma^2 n } \Big)^{1/3}, \Big( \frac{\sigma^2 n}{D^2}\Big)^{1/4},  \frac{4L_\star^2 D^2}{\sigma^2 n }, \frac{4 \eta_0^2 D^2}{\sigma^2 n}  \right\} 
\end{align*}
Suppose Algorithm \ref{Algo Power SMD} runs under the oblivious step-size (\ref{oblivious-stepsize}) with the Grid search Algorithm \ref{Grid search Algorithm} and $K= 4 \lceil \log T \rceil$, then 
\begin{equation}
\begin{aligned}
 \mathbb{E}  \left[ F(x_{out})\right]  - F_\star  &  = O  \left(  \frac{\sigma D \sqrt{n} \log^{1/2}(T)}{\sqrt{T}}   + \frac{\sigma   \log^{n+1}(T)}{D\sqrt{n} T^{n+1/2}}D^{H}(x_F,x_1)\right)  \\
  & + O  \left(   \frac{G^2D \log^{2n}(T)}{ \sigma n \sqrt{n}T^{n+1/2}}  \max \Big\{ \frac{L \sigma}{D \sqrt{n T}}, \frac{\eta_0 \sigma}{D \sqrt{n T} }   \Big\}^n \right) 
\end{aligned}
\end{equation}
\end{corollary}
\begin{proof}
The proof strategy is similar to the one in Lemma \ref{lemma parallization simplified}. We are looking for $k$ such that
\begin{align*}
\frac{1}{2}\frac{\sigma}{D}\sqrt{\frac{ n }{T}} \leq \mu = 2^k \leq \frac{\sigma}{D}\sqrt{\frac{ n }{T}}
\end{align*}
The extra condition is to verify that when $T$ is large, we have 
\begin{align*}
\frac{1}{2} \frac{\sigma}{D}\sqrt{\frac{ n }{T}} \geq \max \left\{ \frac{L_\star}{T}, \frac{\eta_0}{T} \right\} \Leftrightarrow \sqrt{T} \geq \left\{ \frac{2L_\star D}{\sigma \sqrt{n}}, \frac{2 \eta_0 D}{\sigma \sqrt{n}} \right\}
\end{align*}
\end{proof}

\section{Numerical experiments}
For simulation, we are interested in minimizing the maximum eigenvalue function over a hypercube. We can apply both of our methods here, as the feasible set is bounded and the objective function is bounded over this feasible set as well.
\begin{equation}
\begin{aligned}
\min_{x \in \mathbb{R}^{d \times d} }&  \quad   \lambda_{\max}( x ) \\
s.t & \quad \| x - A \|_{\infty}  \leq \rho 
\end{aligned}
\end{equation}
In our simulation, to generate $A$, we will first generate random diagonal matrices then add Gaussian noise. When the Gaussian noise level is low, we refer to it as the sparse case. We mainly reproduce the setting from \cite{daspremont2014stochastic}, more details can be found in \ref{appendix-simulation}. We plot performance versus problem dimension and, having some flexibility on the degree of polynomials we are using in $\alpha_t \propto t^{n},\gamma_t \propto t^{n+1}/n$ for the oblivious step-sizes, we study how that degree affects algorithm performance in practice.

\subsection{Smoothed oracle}
In the first part of the experiments, we run simulations using the smoothed stochastic oracle suggested in \cite{daspremont2014stochastic}. This reference provides an estimation of the smoothness constant $L$, but the authors mention that it could be overestimating the parameter in practice. As a benchmark, we have run the method in \cite{GL12} using prior knowledge on $L$, and the Levy method \cite{levy2018online} using prior knowledge on $D$.
\begin{figure}[H]
    \label{graph-smooth}
    \centering
    \includegraphics[scale=0.28]{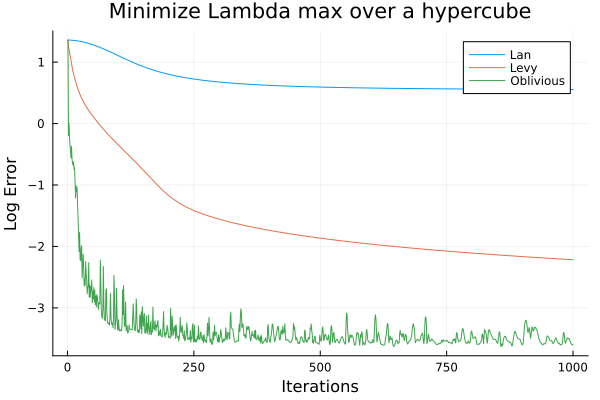}
    \includegraphics[scale=0.28]{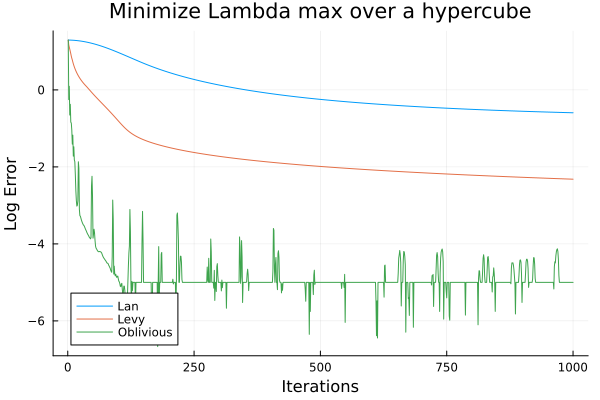}
    \includegraphics[scale=0.28]{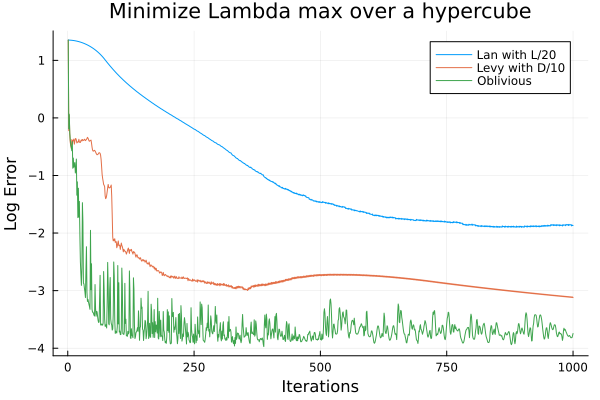}
    \includegraphics[scale=0.28]{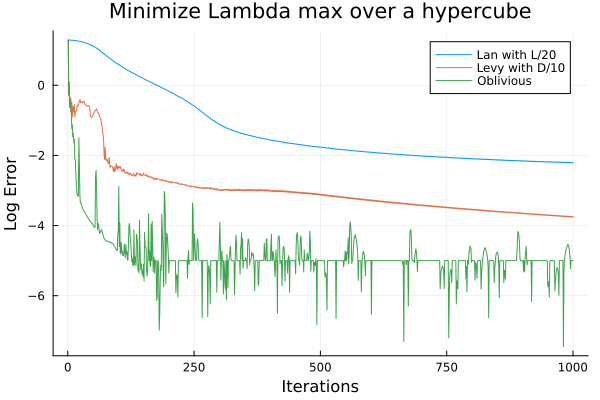}
    \caption{Performance on synthetic data, using stochastic smoothing in normal/sparse cases. Upper figures show performance under theoretical step-sizes, bottom figures use hyper-tuned step-sizes. }
\end{figure}

The first results in Fig.~\ref{graph-smooth} show that the current over-estimation of $L$ and $D$ slows down significantly both Lan and Levy's algorithms. In order to give an advantage to these methods, we also investigate the use of hyper-parameter tuning. 
We observe that the sparse case is well handled by our algorithms, whereas the Levy adaptive algorithm still keeps conservative step sizes. 

\begin{table}[H]
    \label{graph-smooth-2}
    \centering
    \begin{tabular}{|c | c | c | c | c | c |} 
     \hline
     Dimension & Lan & Levy & Oblivious1 & Oblivious2 & Oblivious3 \\ [0.5ex] 
     \hline\hline
     $d$=50 & $>1000$ & 125 & 22 & 29 & 39 \\
     \hline
     $d$=100 & $>1000$  & 529  & 45  & 57 & 70 \\
     \hline
     $d$=150  & $>1000$  & 679  & 106  & 163 & 244 \\ 
     \hline
    \end{tabular}
    \caption{Number of iterations required to reach the precision 1e-2 with stochastic smoothing.}
\end{table}


We notice that, at least in high dimensions, these algorithms' performance is dramatically improved by hyper-parameter tuning (see Fig.~\ref{graph-smooth-2}). In particular, our method still appears to be substantially faster than the alternatives.

\subsection{Power method oracle}
We reproduce these results using the power method oracle, with mirror descent in relative scale \cite{nesterov2023randomized} replacing Lan algorithm. The global phenomenon is similar as before, with $L_\star$ and $D$ over-estimated by the theory and we need to use hyper-tuned step-sizes for Levy and relative scale algorithms to obtain good performance.
\begin{figure}[H]
    \centering
    \includegraphics[scale=0.28]{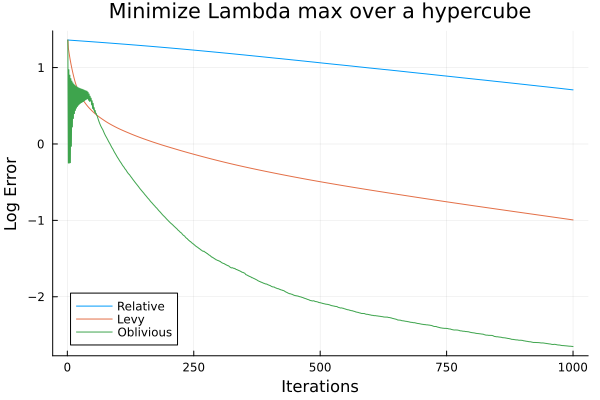}
    \includegraphics[scale=0.28]{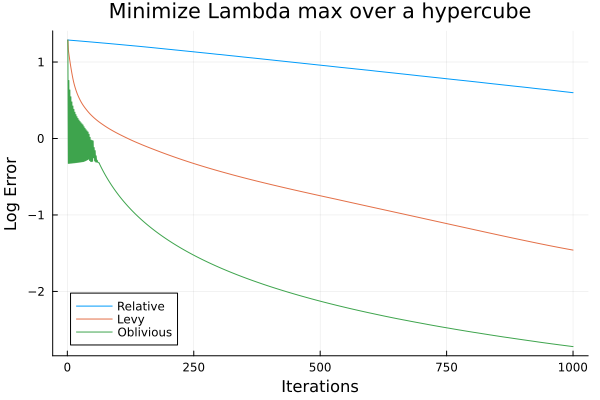}
    \includegraphics[scale=0.28]{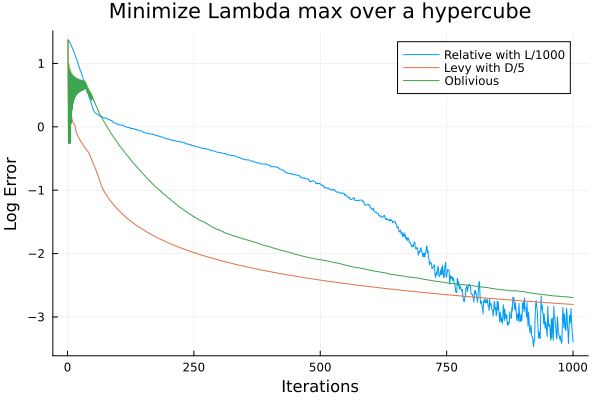}
    \includegraphics[scale=0.28]{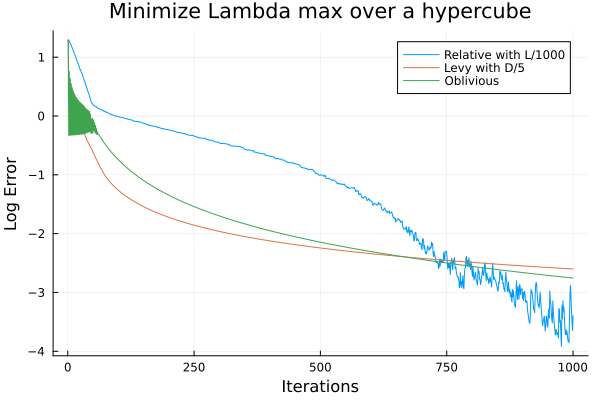}
    \caption{Performance of algorithms on synthetic data, using power method in the normal case and in the sparse case. The upper figures show performances when we use the parameters $L_\star$ and $D$ provided by the theory, the bottom one are using hyper-tuned step-sizes.} 
\end{figure}

However there are two major differences with the smooth case. (the objective function is not smooth anymore) First, we notice that our oblivious algorithm does not converge directly to the minimum as in the smooth case. Instead, our algorithm's first iterations are unstable and converges after a transition time, as our theory predicts. The other point we would like to underline is that power method oracles cost less to compute in practice, compared to the stochastic smoothing oracles, but the accuracy is also worse. More discussions and simulations results can be found in the supplementary material. \ref{appendix-simulation}

\vskip 6mm

\nocite{*}
\bibliographystyle{plain}
\bibliography{ref}

\newpage
\section{Simulations}
\label{appendix-simulation}
We recall the simulation problem that we are solving for the stochastic smoothing problem.
\begin{equation*}
\begin{aligned}
\min_{x \in \mathbb{R}^{d \times d} }&  \quad   \lambda_{\max}( x ) \\
s.t & \quad \| x - A \|_{\infty}  \leq \rho 
\end{aligned}
\end{equation*}
Note that, when using the power method oracle, we are actually solving an approximation of $\lambda_{\max}( x )^2$.

\textbf{Problem setting} To generate $A$, we chose a diagonal matrix $C$ with coefficients that decreases exponentially $C_{i,i} = \exp(-i)$, then we add Gaussian noise to each coefficients $A_{i,j} = C_{i,j} +  \mathcal{N}(0,\sigma^2) $ ($\sigma =0.2$ for the normal case and  $\sigma =0.05$ for the sparse case). We normalise and project to make $A$ symmetric and maximum coefficient equal to 1, and we chose $\rho = \max(diag(A))/2.$ For the stochastic smoothing oracle, we chose the target precision as $\epsilon = 1 e-2$. For the power method oracle, we use order $p=21$.

\textbf{Hyper-tuned step-sizes} For Lan, Levy and relative scale algorithm, we have run simulations with fewer iterations and lower dimension ($T=200, d = 50$). We discovered that by diminishing given constants $L$, $D$ or $L_\star$, all these algorithms require less iteration to reach $\epsilon = 1e-2$ precision. So for the rest of experiments, we have chosen some hyper-tuned coefficients $L/50$, $D/10$ or $L_{\star}/50$ (with associated hyper-tuned step-sizes) and we run these algorithms in higher dimension.

\textbf{Power method oracle in higher dimension}
The idea is to repeat the same experience as for the stochastic smoothed case, as the power method oracle costs less than the stochastic smoothing oracle, we have enlarged the number of iterations from $T=1000$ to $T=5000$. This will affect our oblivious algorithm since we choose $\mu = \sqrt{T}$.

\begin{table}[H]
\label{table-simulation-power}
    \centering
    \begin{tabular}{|c | c | c | c | c | c |} 
     \hline
     Dimension & Relative & Levy & Oblivious1 & Oblivious2 & Oblivious3 \\ [0.5ex] 
     \hline\hline
     $d$=50 & 1591 & 342 & 881 & 644 & 641 \\
     \hline
     $d$=100 & $>5000$  & 959  & 1172  & 713 & 629 \\
     \hline
     $d$=150 & $>5000$  & 2207  & 2659  & 1127  &  767  \\ 
     \hline
    \end{tabular}
    \vskip 1ex
    \caption{Number of iterations required to reach the precision 1e-2 with power method oracle for different algorithms.} 
\end{table}

On the previous table \ref{table-simulation-power}, we see that our oblivious algorithm is still efficient for problems in higher dimension. We also notice a major difference with the smooth case, in fact the degree of polynomial of oblivious algorithm plays a great role here, the oblivious algorithm using $\alpha_t \propto t^3$ require less iterations than the oblivious algorithm using $\alpha_t \propto t$. We think a good choice of the degree could be interesting in those situations.

\textbf{Execution time and stability of the results}
From a practical point of view, it is interesting to collect the computation time for these algorithms and see how the complexity evolves in higher dimension. It would also be interesting to know if the experience are easily reproducible, so we have repeated the simulations 10 times and put the results into the following table with the empirical mean and confidence interval.

\begin{table}[H]
    \centering
    \begin{tabular}{|c | c | c | c | |} 
     \hline
     Dimension $d=50$ & Lan & Levy & Oblivious \\ 
     \hline
     Mean & $> 6$ & $0.68$ & $ 0.14$ \\
     \hline
     Confidence interval & NA & $[0.51,1.03]$ &  $[0.12, 0.16]$ \\ [1ex] 
     \hline\hline
     Dimension $d=100$ & Lan & Levy & Oblivious \\ 
     \hline
     Mean & $> 18$ & $9.98$  & $0.8$ \\
     \hline
     Confidence interval & NA & $[7.73,12.6]$ & $[0.75, 0.88]$ \\ [1ex] 
     \hline\hline
     Dimension $d=150$ & Lan & Levy & Oblivious \\ 
     \hline
     Mean & $> 33$ & $31.2$ & $5.58$ \\
     \hline
     Confidence interval & NA & $ [19.6,>37]$ & $[4.9,6.25]$ \\ [1ex] 
     \hline
    \end{tabular}\vskip 1ex
    \caption{Empirical measures about computation time (in seconds) required to reach the precision 1e-2 with stochastic smoothing.}
\end{table}


     
\begin{table}[H]
    \centering
    \begin{tabular}{|c | c | c | c | |} 
     \hline
     Dimension $d=50$ & Relative & Levy & Oblivious3\\ 
     \hline
     Mean & $1.86$ & $0.39$ & $ 0.79$ \\
     \hline
     Confidence interval & $[1.80,1.92]$ & $[0.28,0.5]$ &  $[0.77,0.81]$ \\ [1ex] 
     \hline\hline
     Dimension $d=100$ & Relative & Levy & Oblivious3\\ 
     \hline
     Mean & $> 18$ & $4.9$  & $2.34$ \\
     \hline
     Confidence interval & NA & $[3.63,6.9]$ & $[2.25,2.4]$ \\ [1ex] 
     \hline\hline
     Dimension $d=150$ & Relative & Levy & Oblivious3\\ 
     \hline
     Mean & $> 26$ & $24.35$ & $4.0$ \\
     \hline
     Confidence interval & NA & $ [20.5, 26.9]$ & $[3.5,4.74]$ \\ [1ex] 
     \hline
    \end{tabular}\vskip 1ex
    \caption{Empirical measures about computation time (in seconds) required to reach the precision 1e-2 with power method.}
\end{table}

When the dimension increases, we notice that computational costs increase at non-linear speed. The reason is that the algorithm requires more computational time for each iteration and it requires more iterations in higher dimension to reach the precision 1e-2. We also observe that our oblivious algorithm outperforms the other two algorithms in higher dimension.

\end{document}